\renewcommand{\leq}{\leqslant}
\renewcommand{\geq}{\geqslant}
\newcommand{\ptl}{\partial}
\newcommand{\rr}{{\mathbb{R}}}
\newcommand{\rrn}{\mathbb{R}^{n+1}}
\newcommand{\la}{\lambda}
\newcommand{\sph}{\mathbb{S}}
\newcommand{\nn}{\mathbb{N}}
\newcommand{\sub}{\subset}
\newcommand{\subeq}{\subseteq}
\newcommand{\escpr}[1]{\big<#1\big>}
\newcommand{\Sg}{\Sigma} 
\newcommand{\sg}{\sigma}
\newcommand{\Om}{\Omega}
\newcommand{\eps}{\varepsilon}
\newcommand{\var}{\varphi}
\newcommand{\ric}{\text{Ric}}
\newcommand{\ind}{\mathcal{Q}}
\newcommand{\indo}{\mathcal{I}}
\DeclareMathOperator{\divv}{div}
\newtheorem{theorem}{Theorem}[section]
\newtheorem{proposition}[theorem]{Proposition}
\newtheorem{lemma}[theorem]{Lemma}
\newtheorem{corollary}[theorem]{Corollary}
\theoremstyle{definition}
\newtheorem{remark}[theorem]{Remark}
\newtheorem{remarks}[theorem]{Remarks}
\newtheorem{example}[theorem]{Example}
\theoremstyle{remark}
\numberwithin{equation}{section}
\begin{document}

\title[Stable and isoperimetric regions in weighted manifolds with boundary]{Stable and isoperimetric regions in some weighted manifolds with boundary}

\author[C.~Rosales]{C\'esar Rosales}
\address{Departamento de Geometr\'{\i}a y Topolog\'{\i}a and Excellence Research Unit ``Modeling Nature'' (MNat) Universidad de Granada, E-18071,
Spain.} 
\email{crosales@ugr.es}

\date{\today}

\thanks{The author is supported by MINECO grant No.~MTM2017-84851-C2-1-P and Junta de Andaluc\'ia grant No.~FQM325} 

\subjclass[2010]{49Q20, 53A10} 

\keywords{Weighted manifolds, Riemannian cylinders, isoperimetric problem, stable sets}

\begin{abstract}
In a Riemannian manifold with a smooth positive function that weights the associated Hausdorff measures we study stable sets, i.e., second order minima of the weighted perimeter under variations preserving the weighted volume. By assuming local convexity of the boundary and certain behaviour of the Bakry-\'Emery-Ricci tensor we deduce rigidity properties for stable sets by using deformations constructed from parallel vector fields tangent to the boundary. As a consequence, we completely classify the stable sets in some Riemannian cylinders $\Om\times\rr$ with product weights. Finally, we also establish uniqueness results showing that any minimizer of the weighted perimeter for fixed weighted volume is bounded by a horizontal slice $\Om\times\{t\}$.
\end{abstract}

\maketitle

\thispagestyle{empty}

\section{Introduction}
\label{sec:intro}
\setcounter{equation}{0}

A \emph{weighted manifold} is a triple $(M,g,f)$, where $(M,g)$ is a Riemannian manifold, possibly with non-empty boundary $\ptl M$, and $f$ is a smooth positive function used to weight the Hausdorff measures associated to the Riemannian distance. In this context, the \emph{partitioning problem} seeks those sets in $M$ minimizing the weighted perimeter for fixed weighted volume. Here we consider the \emph{interior perimeter}, so that the contribution of the boundary intersected with $\ptl M$ is not taken into account. When the solutions to this problem exist they are called \emph{weighted isoperimetric regions} or \emph{weighted minimizers}. In the study of these regions the analysis of critical points and second order minima plays an important role. We say that a set is \emph{weighted stable} if it is a critical point with non-negative second derivative of the weighted perimeter under deformations preserving the weighted volume and the boundary $\ptl M$. Our aim in this work is to prove rigidity properties and classification results for weighted stable sets in order to deduce uniqueness of weighted minimizers.

Due to the regularity of weighted isoperimetric regions, see Theorem~\ref{th:regularity}, we can restrict to weighted stable sets $E$ having finite weighted perimeter and almost smooth interior boundary $\overline{\ptl E\cap\text{int}(M)}$. Moreover, by Theorem~\ref{th:parabolicity}, we can suppose that the regular part $\Sg$ of the interior boundary verifies the \emph{weighted parabolicity condition}, an analytical feature that generalizes the compactness of $\Sg$, see Section~\ref{subsec:isoperimetric} for precise definitions and examples. After previous work of Bayle~\cite[Ch.~3]{bayle-thesis} when $\ptl\Sg=\emptyset$, see also~\cite[Sect.~3]{rcbm}, Castro and the author~\cite[Sect.~3]{castro-rosales} computed the first and second variational formulas to derive basic consequences for weighted stable sets with $\ptl \Sg\neq\emptyset$. It follows that the \emph{weighted mean curvature} $H_f$ given in \eqref{eq:fmc} is constant, $\Sg$ meets $\ptl M$ orthogonally along the free boundary $\ptl\Sg$, and the \emph{weighted index form} $\indo_f$ introduced in \eqref{eq:index1} satisfies $\indo_f(u,u)\geq 0$ for any function $u\in C^\infty_0(\Sg)$ such that $\int_\Sg u\,da_f=0$. This inequality involves the second fundamental form $\text{II}$ of $\ptl M$ and the \emph{Bakry-\'Emery-Ricci tensor} $\ric_f$ defined in \eqref{eq:fricci} in such a way that it becomes more restrictive when \emph{$\ptl M$ is locally convex} ($\text{II}\geq 0$) and $\ric_f\geq 0$. Indeed, these convexity assumptions together with the weighted parabolicity of $\Sg$ lead to an \emph{extended stability inequality} $\indo_f(u,u)\geq 0$, which is valid under certain integrability hypotheses for mean zero functions that need not vanish on $\Sg$, see Proposition~\ref{prop:extended}. Thus, the idea is to insert a suitable test function into this inequality to infer interesting properties for stable sets. 

In order to describe in more detail our setting and results we first recall some previous works about the partitioning problem in weighted manifolds with locally convex boundary and $\ric_f\geq 0$. 

In Gauss space, which is $\rrn$ with weight $\gamma_c(p):=e^{-c|p|^2/2}$, $c>0$, it is well known that, up to sets of volume zero, the Euclidean half-spaces uniquely minimize the weighted perimeter for fixed weighted volume, see Morgan~\cite[Thm.~18.2]{gmt} and the references therein. Indeed, the half-spaces are also the only stable sets with respect to $\gamma_c$, as was established in \cite{rosales-gauss}. In the same paper, the author showed that, in a Gaussian half-space or slab $M$, any weighted stable set is the intersection with $M$ of a half-space parallel or perpendicular to $\ptl M$. From here, an area comparison between the stable candidates allows to conclude that only those orthogonal to $\ptl M$ minimize. 

In \cite{bcm}, Brock, Chiacchio and Mercaldo employed optimal transport to deduce that in $\rr^n\times\rr^+$ with weight $f(x,t):=t^m\,\gamma_1(x,t)$, $m>0$, the half-spaces perpendicular to the boundary hyperplane $t=0$ are the unique weighted isoperimetric regions. This situation is a particular case of a one-dimensional log-concave perturbation $f(x,t):=e^{\omega(t)}\,\gamma_c(x,t)$ of $\gamma_c$. We studied these weights in \cite{rosales-gauss} for Euclidean half-spaces and slabs $\rr^n\times (a,b)$, proving the same uniqueness results for stable and isoperimetric sets as in the Gaussian setting. 

More generally, it is interesting to investigate the partitioning problem for a weight $f(p):=e^{\omega(p)}\,\gamma_c(p)$, where $\omega$ is a concave function. A reason for this is a result of Caffarelli~\cite{caffarelli}, see also Kim and Milman~\cite{contraction}, saying that the optimal transport Brenier map pushing the Gaussian probability measure forward the probability measure associated to $f(p)\,dp$ is a $1$-Lipschitz function. By combining this fact with the Gaussian isoperimetric inequality, Brock, Chiacchio and Mercaldo~\cite{bcm3} discussed the partitioning problem for certain perturbations of $\gamma_1$ that affect not just one but $n$ variables, inside the cylinder $M:=C\times\rr$ where $C:=\prod_{i=1}^n(a_i,b_i)$ and $-\infty\leq a_i<b_i\leq \infty$ for any $i=1,\ldots,n$. This applies in particular for perturbation terms $\omega(x_1,\ldots,x_n):=\sum_{i=1}^n\omega_i(x_i)$ depending on $C^2$ concave functions $\omega_i$ on $(a_i,b_i)$. We stress that the gradient estimate for the Brenier map used in \cite{bcm3} was obtained by means of elementary and self-contained tools.

Other relevant generalizations of the Gauss space are the \emph{shrinking gradient Ricci solitons}. These are weighted manifolds $(M,g,f)$ for which the Bakry-\'Emery-Ricci tensor verifies the equality $\ric_f=c\,g$ for some constant $c>0$. The interest in these spaces comes from the theory of the Ricci flow since they provide self-similar solutions that model some singularities of the flow, see \cite{hamilton}. From the geometric viewpoint they are weighted counterparts of the Einstein manifolds. Indeed, for any Einstein manifold $\Om$ of constant Ricci curvature $c>0$, the product manifold $\Om\times\rr^k$ with vertical weight $f(x,t):=e^{-c|t|^2/2}$ is a $c\,$-shrinker. In relation to our problem, stability properties of complete hypersurfaces with constant weighted mean curvature, finite weighted area and \emph{empty boundary}, including estimates on the index of $\indo_f$ and characterization theorems for low indexes, have been proved by Colding and Minicozzi~\cite{colding-minicozzi} and by McGonagle and Ross~\cite{mcgonagle-ross} in Gauss space, by Cheng, Mejia and Zhou~\cite{cmz2} in $\sph^n\times\rr$, and by Alencar and Rocha~\cite{alencar-rocha} in shrinkers with a non-trivial parallel vector field. We remark that these results do not apply directly to the interior boundary of a weighted isoperimetric region due to the possible presence of singularities in high dimensions. We also point out that a shrinker having a non-vanishing parallel vector field is rigid, as stated by Petersen and Wylie~\cite{petersen-wylie}. This means that it is isometric to a Riemannian cylinder $\Om\times\rr$ with a product weight $f(x,t):=e^{h(x)}\,e^{v(t)}$ such that $\Om$ is a $c\,$-shrinker with respect to the horizontal factor $e^h$, while the vertical one $e^v$ is an affine perturbation of $\gamma_c$.

Motivated by all this, in this paper we focus on the analysis of weighted stable sets in a broader context. More precisely, we consider a Riemannian cylinder $M:=\Om\times\rr$ over a complete manifold $\Om$, possibly with smooth locally convex boundary $\ptl\Om$, and endowed with a product weight $f(x,t):=e^{h(x)}\,e^{v(t)}$ such that $\ric_h\geq c>0$ on $\Omega$ and $e^v$ is an affine perturbation of the Gaussian weight $\gamma_c$. Observe that our generalization comes not only since we allow arbitrary (smooth) cylinders with non-empty boundary, but also since the condition $\ric_h\geq c$ permits arbitrary horizontal log-concave perturbations of a $c\,$-shrinker, i.e., weights of the form $e^{\mu(x)+\omega(x)}\,e^{v(t)}$ where $\ric_\mu=c$ and $\omega$ is a concave function. In particular, any smooth convex cylinder with Gaussian weight $\gamma_c$ eventually perturbed by a log-concave horizontal function is included into this setting.

The weighted cylinders described above are concrete cases of a Riemannian manifold $(M,g)$ having a unit parallel vector field $X$ tangent to the boundary $\ptl M$, and endowed with a weight $f$ such that the Bakry-\'Emery-Ricci tensor ``splits with respect to $X$'', in the sense that $\ric_f(X,Y)=c\,g(X,Y)$ when $Y$ is any vector field, and $\ric_f(Y,Y)\geq c\,|Y|^2$ when $Y$ is orthogonal to $X$. Under these more general conditions we establish in Section~\ref{sec:stable} rigidity properties for any weighted stable set $E$. Indeed, in Theorem~\ref{th:stable} we show that, either the vector field $X$ is tangent to the regular part $\Sg$ of the interior boundary of $E$, or $\Sg$ is a totally geodesic hypersurface in $(M,g)$ with $\ric_f(N,N)=c$ on $\Sg$ and $\text{II}(N,N)=0$ along $\ptl\Sg$. As we noticed before, we allow the presence of a negligible singular set $\Sg_0$ in the interior boundary, so that our stability result applies for weighted isoperimetric regions. To prove the theorem we employ the extended stability inequality in Proposition~\ref{prop:extended} (ii) with the test function $u:=\alpha+g(X,N)$, where $N$ is the unit normal on $\Sg$ pointing into $E$ and $\alpha$ is a constant such that $\int_\Sg u\,da_f=0$.  From a geometric viewpoint this function comes from a volume-preserving variation which combines equidistant sets to $E$ with the one-parameter flow associated to $X$. After a long computation, where we use that $\var:=g(X,N)$ is an eigenfunction for the stability operator $\mathcal{L}_f$ defined in \eqref{eq:jacobi}, we infer that this deformation strictly decreases the weighted area unless the hypersurface $\Sg$ satisfies the announced restrictions.

Next, we come back to Riemannian cylinders $M:=\Om\times\rr$ with weights. In $M$ the vertical vector field $\xi$ determines a foliation by \emph{horizontal slices} $\Om_t:=\Om\times\{t\}$. For a product weight $f(x,t):=e^{h(x)}\,e^{v(t)}$ these slices are totally geodesic hypersurfaces of constant weighted mean curvature and meeting $\ptl M$ orthogonally. Moreover, by assuming as above that $\ric_h\geq c>0$ and $e^v$ is an affine perturbation of $\gamma_c$, it follows that the associated \emph{horizontal half-spaces} $\Om\times(-\infty,t)$ and $\Om\times (t,\infty)$ are weighted stable sets, see Examples~\ref{ex:horstable} and \ref{ex:starig}. In Section~\ref{subsec:unique} we analyze weighted stable sets in this situation. In Theorem~\ref{th:stacyl} we refine the conclusions of Theorem~\ref{th:stable}, as we establish that the hypersurface $\Sg$ of a weighted stable set $E$ is always totally geodesic (even if $\xi$ is tangent to $\Sg$) and satisfies the equalities $\ric_f(N,N)=c$ on $\Sg$ and $\text{II}(N,N)=0$ along $\ptl\Sg$. Note that these rigidity properties allow the existence of weighted stable sets different from horizontal half-spaces: in a Gaussian slab $M$ any half-space with boundary parallel to $\ptl M$ is weighted stable. So, in order to deduce the uniqueness of horizontal half-spaces as weighted stable sets we need further hypotheses involving the cylinder and the weight. In Corollary~\ref{cor:staunique} we show two conditions in this direction. As a matter of fact, the conclusions $\ric_f(N,N)=c$ and $\text{II}(N,N)=0$ along $\ptl\Sg$ enforce the hypersurface $\Sg$ to be a horizontal slice if we accept that, either $\ric_h>c$ on $\Om$, or $\ptl\Om$ is locally strictly convex and $\ptl\Sg\neq\emptyset$. We remark that the hypothesis $\ptl\Sg\neq\emptyset$ is necessary, since there are examples of weighted stable sets with $\ptl\Sg=\emptyset$ even if $\ptl M\neq\emptyset$.

In Section~\ref{subsec:unique} we also study the isoperimetric regions in our weighted cylinders. Observe that in such a cylinder $M$ we have $\ric_f\geq c>0$ and so, $M$ has finite weighted volume by a result of Morgan~\cite{morgandensity}. Thus, we can use the L\'evy-Gromov type isoperimetric inequality of Bakry and Ledoux~\cite{bl}, see also Milman~\cite{milman}, to conclude that, up to normalization, the weighted perimeter of a set in $M$ is no lower than the perimeter with respect to $\gamma_c$ of a Euclidean half-space enclosing the same weighted volume. By combining this fact with the Gaussian isoperimetric inequality, it is easy to prove that the horizontal half-spaces in $M$ are always weighted minimizers. In Corollary~\ref{cor:isocyl} we employ our stability analysis to infer restrictions for arbitrary minimizers and criteria for horizontal half-spaces to be the only weighted isoperimetric regions. In Appendix~\ref{app:a} we provide a self-contained proof of the isoperimetric property of horizontal half-spaces. The argument is mainly based on the second variation formula and also allows to deduce Corollary~\ref{cor:isocyl} without having in mind deep stability consequences.

In Section~\ref{subsec:examples} we gather some relevant examples where our results are applied. These cover most of the cases previously considered and new interesting situations. Sometimes we find vertical stable candidates that must be discarded as minimizers by means of additional comparisons. This happens in the $c\,$-shrinker given by the weighted cylinder $M:=\Om\times\rr$, where $\Om$ is an Einstein manifold of Ricci curvature $c>0$, and the weight has the form $f(x,t):=e^{-ct^2/2}$. In this example, the uniqueness of horizontal half-spaces as weighted isoperimetric regions follows from the classification of weighted stable sets in Corollary~\ref{cor:stasol} after a geometric inequality of Maeda~\cite{maeda} showing that the vertical candidates do not minimize. In a similar way, in a Euclidean half-space or slab $M$ endowed with an arbitrary horizontal log-concave perturbation of the Gaussian weight, we rule out half-spaces parallel to $\ptl M$ as minimizers by taking into account the Heintze-Karcher type inequality proved by Morgan~\cite{morgandensity}.

We finish this introduction by pointing out that many of our results still hold, with suitable modifications, when the horizontal weight $e^h$ verifies $\ric_h\geq 0$  and the vertical one $e^v$ is a log-linear function. However, this requires the further hypothesis $\int_\Sg g(\xi,N)\,da_f\neq 0$, which is very restrictive. A detailed discussion is found in Section~\ref{subsec:zerocase}.

The paper contains four sections. In Section~\ref{sec:prelimi} we introduce the notation and review preliminary facts about weighted cylinders, isoperimetric regions and stable sets. In Section~\ref{sec:stable} we prove our main stability result for manifolds with locally convex boundary and certain behaviour of the Bakry-\'Emery-Ricci tensor with respect to a parallel vector field. The fourth section is devoted to our rigidity properties and uniqueness criteria for stable sets and minimizers in weighted cylinders.

\section{Preliminaries}
\label{sec:prelimi}
\setcounter{equation}{0}

In this section we introduce the notation and state some results for weighted Riemannian cylinders, isoperimetric regions and stable sets that will be useful throughout this work. We begin with some generalities about manifolds with weights.

A \emph{weighted manifold} is a triple $(M,g,f)$, where $(M,g)$ is a smooth complete Riemannian manifold of dimension $n+1$, possibly with non-empty smooth boundary $\ptl M$, and $f\in C^\infty(M)$ with $f>0$ on $M$. We denote by $\text{int}(M)$ the set $M\setminus\ptl M$. The notations $\escpr{\cdot\,,\cdot}$ and $|\cdot|$ refer to the scalar product and the associated norm for tangent vectors in $M$. 

The function $f$ is used to weight the Hausdorff measures associated to the Riemannian distance. In particular, for any Borel set $E\subeq M$, the \emph{weighted volume} and the \emph{interior weighted area} of $E$ are defined by
\begin{equation}
\label{eq:volarea}
V_f(E):=\int_E \,dv_f,\qquad 
A_f(E):=\int_{E\cap\text{int}(M)} da_f, 
\end{equation}
where $dv_f:=f\,dv$ and $da_f:=f\,da$ are the weighted elements of volume and area, respectively. Note that $E\cap\ptl M$ does not contribute to $A_f(E)$.

By following the approach of Caccioppoli and De Giorgi, the \emph{interior weighted perimeter} of a Borel set $E\subeq M$ is introduced by equality
\[
P_f(E):=\sup\left\{\int_E\divv_f X\,dv_f\,;\,|X|\leq 1\right\},
\]
where $X$ ranges over smooth vector fields with compact support on $\text{int}(M)$. The integrand at the right hand side of the equation is the \emph{weighted divergence} given by
\begin{equation}
\label{eq:divf}
\divv_f X:=\divv X+\escpr{\nabla\psi,X}.
\end{equation}
Here $\divv$ is the divergence in $(M,g)$ and $\nabla\psi$ stands for the Riemannian gradient of $\psi:=\log(f)$. The divergence theorem in $(M,g)$ and the identity $\divv_fX\,dv_f=\divv(fX)\,dv$ imply
\begin{equation}
\label{eq:macorra}
P_f(E)=A_f\big(\ptl E\cap\text{int}(M)\big),
\end{equation}
for any open set $E$ in $M$ such that $\ptl E\cap\text{int}(M)$ is a smooth hypersurface. By a set of \emph{finite weighted perimeter} in $M$ we mean a Borel set $E\subeq M$ such that $P_f(E)$ is finite. As the perimeter does not change by sets of volume zero we can suppose that $0<V_f(E\cap B)<V_f(B)$ for any open metric ball $B$ centered at $\ptl E\cap\text{int}(M)$, see \cite[Prop.~3.1]{giusti}. 

In $(M,g,f)$ there are weighted notions of curvature involving the Riemannian curvatures and the derivatives of the function $\psi:=\log(f)$. Here we will only consider the \emph{Bakry-\'Emery-Ricci tensor}, first introduced by Lichnerowicz \cite{lich1,lich2} and later generalized by Bakry and \'Emery \cite{be} in the framework of diffusion generators. This is given by
\begin{equation}
\label{eq:fricci}
\text{Ric}_f:=\ric-\nabla^2\psi,
\end{equation}
where $\text{Ric}$ and $\nabla^2$ denote, respectively, the Ricci tensor and the Hessian operator in $(M,g)$. The Ricci tensor is the $2$-tensor $\ric(w_1,w_2):=\text{trace}(u\mapsto R(w_1,u)w_2)$, where $R$ is the curvature tensor in $(M,g)$ as defined in \cite[Sect.~4.2]{dcriem}. For a point $p\in M$, the \emph{Bakry-\'Emery-Ricci curvature} in the direction of a unit vector $w\in T_pM$ is the number $(\ric_f)_p(w,w)$. If this is always greater than or equal to a constant $c\in\rr$, then we write $\ric_f\geq c$. 

If the equality $\ric_f=c\,g$ is satisfied for some $c\in\rr$, then $(M,g,f)$ is said to be a \emph{$c$-gradient Ricci soliton}. The soliton is a \emph{shrinker} (resp. an \emph{expander}) when $c>0$ (resp. $c<0$). A \emph{steady soliton} is one for which $\ric_f=0$. This terminology comes from the theory of the Ricci flow, in relation to the study of its singularities. It is clear that an Einstein manifold of constant Ricci curvature $c$ is a gradient Ricci soliton with respect to constant weights. Another example is the \emph{$c$-Gaussian soliton}, which is Euclidean space $\rrn$ with the radial weight $\gamma_c(p):=e^{-c|p|^2/2}$.

In the sequel, we will denote by $C_0^\infty(M)$ the space of smooth functions with compact support in $M$ (possibly non-vanishing along $\ptl M$), by $L^1(M,dv_f)$ the space of integrable functions with respect to the measure $dv_f$, and by $H^1(M,dv_f)$ the weighted Sobolev space of functions $u:M\to\rr$ such that $u^2\in L^1(M,dv_f)$ and $u$ has a distributional gradient $\nabla u$ with $|\nabla u|^2\in L^1(M,dv_f)$.

\subsection{Weighted cylinders}
\label{subsec:cylinders}
\noindent

For a complete $n$-dimensional Riemannian manifold $\Om$, possibly with boundary $\ptl\Om$, the \emph{Riemannian cylinder} over $\Om$ is the Riemannian manifold $(M,g)$, where $M:=\Om\times\rr$ and $g$ is the product of the Riemannian metric in $\Om$ with the standard metric in $\rr$. The reader is referred to \cite[Ch.~7]{oneill} for geometric properties of Riemannian cylinders and general warped products.

The \emph{vertical vector field} $\xi$ in $M$ is the one that assigns to any point $p=(x,t)\in M$ the tangent vector $\xi(p):=(0,1)\in T_x\Om\times\rr$. This is a unit parallel vector field which is tangent to $\ptl M=\ptl\Om\times\rr$. The isometries in the one-parameter group associated to $\xi$ are called \emph{vertical translations}. We say that a submanifold $\Sg$ in $M$ is \emph{vertical} if $\xi$ is tangent to $\Sg$. The vector field $\xi$ determines a foliation of $M$ by  \emph{horizontal slices} $\Om_t:=\Om\times\{t\}$, $t\in\rr$, for which $\xi$ is a unit normal vector. Hence $\Om_t$ is a totally geodesic hypersurface in $M$ meeting orthogonally $\ptl M$ along $\ptl\Om_t=\ptl\Om\times\{t\}$. The (open) \emph{horizontal half-spaces} determined by $\Om_t$ are the sets $\Om\times(-\infty,t)$ and $\Om\times(t,\infty)$. 

Suppose that there is a smooth unit normal vector field along $\ptl\Om$. Then, we can use the vertical translations to extend it to a smooth unit normal along $\ptl M$. In this case, we denote by $\text{II}$ the second fundamental form of $\ptl M$ with respect to the inner unit normal. It is clear that $\text{II}(\xi,\xi)=0$ along $\ptl M$. Observe also that $\ptl M$ is \emph{locally convex}, i.e. $\text{II}\geq 0$, if and only if $\ptl\Om$ is locally convex. 

The most natural weights to consider in a Riemannian cylinder $M=\Om\times\rr$ are product weights. These are defined as $f(x,t):=e^{h(x)}\,e^{v(t)}$ for some functions $h\in C^\infty(\Om)$ and $v\in C^\infty(\rr)$. In particular, $\Om$ and $\rr$ become weighted manifolds with respect to the  horizontal and vertical components of the weight. For later use we note that, for any point $p=(x,t)\in M$, and any pair of vectors $w_1,w_2\in T_pM$, the Bakry-\'Emery-Ricci tensor satisfies
\begin{equation}
\label{eq:riccidesc}
(\ric_f)_p(w_1,w_2)=(\ric_h)_x\big((w_1)_*,(w_2)_*\big)-v''(t)\,\escpr{\xi(p),w_1}\,\escpr{\xi(p),w_2},
\end{equation}
where $(w_i)_*\in T_x\Om$ is the horizontal projection of $w_i$ and $\ric_h$ stands for the Bakry-\'Emery-Ricci tensor in $\Om$ with respect to the weight $e^h$.

As a consequence of \eqref{eq:riccidesc}, if $\Om$ is a $c$-gradient Ricci soliton with respect to a weight $e^h$, then $\Om\times\rr$ is a $c$-gradient Ricci soliton for the product weight $f(x,t):=e^{h(x)}\,e^{-ct^2/2}$. The converse is true by a result of Petersen and Wylie~\cite[Lem.~2.1]{petersen-wylie}: a weight $f$ on $\Om\times\rr$ defines a $c$-gradient Ricci soliton if and only if $f$ is a product weight such that the resulting horizontal and vertical weighted manifolds are also $c$-gradient Ricci solitons. A relevant case is the weighted cylinder $\Om\times\rr$, where $\Om$ is an Einstein manifold of constant Ricci curvature $\ric_\Om$ equal to $c$ and $f(x,t):=e^{-ct^2/2}$.

\subsection{Isoperimetric regions}
\label{subsec:isoperimetric}
\noindent

The \emph{partitioning problem} in a weighted manifold $(M,g,f)$ studies those sets in $M$ that minimize the weighted perimeter for fixed weighted volume. A \emph{weighted isoperimetric region} or \emph{weighted minimizer} of volume $w\in(0,V_f(M))$ is a set of finite weighted perimeter $E\sub M$ with $V_f(E)=w$ and such that $P_f(E)\leq P_f(E')$, for any other set $E'\sub M$ with $V_f(E')=w$.

In our context, where the weight $f$ is smooth and positive in $M$, the regularity properties of weighted minimizers are the same as in the unweighted case. In the Euclidean setting, the interior regularity was obtained by Gonzalez, Massari and Tamanini \cite{go-ma-ta}, while Gr\"uter~\cite{gruter} proved the regularity along the free boundary. The reader is referred to Morgan~\cite[Sect.~3.10]{morgan-reg} and Milman~\cite[Sects.~2.2, 2.3]{milman} for the corresponding extensions to Riemannian manifolds with or without weights. We gather their results in the next statement.

\begin{theorem}
\label{th:regularity}
Let $E$ be a weighted isoperimetric region in a weighted manifold $(M,g,f)$ of dimension $n+1$. Then, the interior boundary $\overline{\ptl E\cap\emph{int}(M)}$ is a disjoint union $\Sg\cup\Sg_0$, where $\Sg$ is a smooth embedded hypersurface, possibly with boundary $\ptl\Sg=\Sg\cap\ptl M$, and $\Sg_0$ is a closed set of singularities with Hausdorff dimension less than or equal to $n-7$. Moreover, at any point $p\in\Sg_0$, there is a closed area-minimizing tangent cone $C_p\sub T_pM$ different from a hyperplane or a half-hyperplane. In particular, the squared norm $|\sg|^2$ of the second fundamental form of $\Sg$ tends to $\infty$ when we approach $p$ from $\Sg$.
\end{theorem}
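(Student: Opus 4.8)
The plan is to reduce the statement to the classical regularity theory for almost-minimizers of the perimeter, which is legitimate precisely because the weight is smooth and strictly positive. Fix a compact set $K\subeq M$; then there are constants $c_1,c_2>0$ with $c_1\leq f\leq c_2$ and $|\nabla\psi|\leq c_2$ on $K$, where $\psi:=\log(f)$. Consequently the weighted perimeter $P_f$ and the Riemannian perimeter $P$ are comparable on $K$, so a competitor for one is a competitor for the other up to bounded multiplicative and additive errors. The first step is to show that $E$ is a \emph{$(\la,r_0)$-almost-minimizer} of $P_f$. Given a competitor $E'$ with $E'\,\triangle\,E$ contained in a small ball $B_r(p)$, one restores the weighted volume to $V_f(E)$ by a fixed local deformation supported away from $p$; the first variation formula shows that correcting a volume defect $\delta$ costs at most $c_0\,|\delta|$ in weighted perimeter, and $|\delta|\leq C\,V_f(B_r(p))=O(r^{n+1})$. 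Hence $P_f(E)\leq P_f(E')+\la\,V_f(E\,\triangle\,E')$ for a fixed $\la$ and all small $r$, and by the comparability above $E$ is also an almost-minimizer of $P$, to which the full strength of geometric measure theory applies.

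With the almost-minimizer property in hand, I would invoke the De Giorgi--Tamanini interior regularity theory, in the form used by Gonzalez, Massari and Tamanini for the volume-constrained problem: away from a closed singular set $\Sg_0$, the interior boundary $\overline{\ptl E\cap\text{int}(M)}$ is a smooth embedded hypersurface $\Sg$. Federer's dimension-reduction argument, together with the absence of nonplanar area-minimizing cones in low dimensions, then yields the bound $\dim_{\mathcal{H}}\Sg_0\leq n-7$. For points of $\ptl M$, Gr\"uter's reflection method provides regularity up to the free boundary $\ptl\Sg=\Sg\cap\ptl M$. The Riemannian and weighted extensions of these statements are exactly those collected by Morgan and Milman, so this portion amounts to quoting the appropriate references once the almost-minimality is secured.

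At a singular point $p\in\Sg_0$, the monotonicity formula for almost-minimizers produces a tangent cone $C_p\sub T_pM$ as a blow-up limit. Under the rescaling the weight $f$ converges to the constant $f(p)$ and the metric to the flat metric, so $C_p$ is area-minimizing for the Euclidean perimeter. If $C_p$ were a hyperplane, or a half-hyperplane when $p\in\ptl M$, the $\eps$-regularity theorem would force $p$ to be a regular point, contradicting $p\in\Sg_0$; hence $C_p$ is of neither type. Finally, the blow-up $|\sg|^2\to\infty$ as one approaches $p$ along $\Sg$ follows by contradiction: a uniform bound on the second fundamental form near $p$ would, through interior and boundary curvature estimates, produce a smooth graphical limit and render $p$ regular.

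The hard part will be making this reduction uniform, namely verifying that the weighted volume can always be restored by a deformation whose perimeter cost is linear in the volume defect with a constant independent of the competing ball, and confirming that the rescaled weight genuinely converges to a constant so that the Euclidean classification of tangent cones may be applied verbatim. The free boundary analysis, where half-hyperplane cones must be excluded and Gr\"uter's estimates carried over to the weighted Riemannian setting, is the most delicate ingredient; away from $\ptl M$ the argument is the standard interior theory.
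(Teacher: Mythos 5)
Your proposal is correct and takes essentially the same route as the paper, which gives no independent argument for this theorem but simply gathers the classical results --- Gonzalez--Massari--Tamanini for interior regularity, Gr\"uter for the free boundary, and Morgan/Milman for the extensions to Riemannian manifolds with smooth positive weights --- that your sketch reconstructs. The $(\la,r_0)$-almost-minimality reduction you describe (volume restoration at linear perimeter cost, local comparability of $P_f$ with the unweighted perimeter, then dimension reduction, tangent-cone analysis and curvature estimates) is precisely the mechanism carried out in the references the paper quotes, so the two approaches coincide.
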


Observe that neither the weighted minimizers nor their interior boundaries need to be bounded. For example, in $c\,$-Gaussian solitons with $c>0$ any weighted isoperimetric region coincides, up to a set of volume zero, with a half-space~\cite[Ch.~18]{gmt}. However, for bounded weights, any minimizer satisfies an analytical generalization of compactness: the parabolicity of its interior boundary. 

The parabolicity of a Riemannian manifold is characterized by the fact that compact subsets have vanishing capacities, see for instance~\cite[Sect.~5]{grigoryan}. By following this approach, we say that a smooth hypersurface $\Sg$ of a weighted manifold $(M,g,f)$ is \emph{weighted parabolic} or has \emph{null weighted capacity} if $\text{Cap}_f(K)=0$ for any compact set $K\subeq\Sg$. Here $\text{Cap}_f(K)$ is the \emph{weighted capacity} of $K$ defined as in Grigor'yan and Masamune \cite[Sect.~2]{gri-masa} by 
\[
\text{Cap}_f(K):=\inf\left\{\int_\Sg|\nabla_\Sg u|^2\,da_f\,;\,u\in C^\infty_0(\Sg),\,0\leq u\leq 1,\,u=1\text{ in }K\right\},
\]
where $\nabla_\Sg u$ denotes the Riemannian gradient of $u$ in $\Sg$. From this definition it is easy to see, by taking a countable exhaustion of $\Sg$ by relatively compact subsets, that $\Sg$ is weighted parabolic if and only if there is a sequence $\{\var_k\}_{k\in\nn}\sub C^\infty_0(\Sg)$ with $\var_k\neq 0$ and such that
\begin{equation}
\label{eq:sequence}
\begin{cases}
\mathsmaller{\bullet} \ 0\leq\var_k\leq 1 \text{ for any } k\in\nn, 
\\
\mathsmaller{\bullet} \ \text{given a compact set } K\subeq\Sigma, \text{ there is } k_0\in\nn \text{ such that } \var_k=1 \text{ on } K \text{ for any } k\geq k_0,
\\
\mathsmaller{\bullet} \ \lim_{k\to\infty}\int_\Sg|\nabla_\Sg\var_k|^2\,da_f=0. 
\end{cases}
\end{equation}
It is clear that a compact hypersurface $\Sg$ is weighted parabolic. The converse is not true, and there are criteria ensuring parabolicity of non-compact hypersurfaces under additional conditions like growth properties of the weighted area, see Grigor'yan~\cite[Sect.~9.1]{grigoryan2}. In particular, any complete hypersurface $\Sg$ in $M$ with $A_f(\Sg)<\infty$ is weighted parabolic.

If $E$ is a weighted minimizer in $(M,g,f)$ with $\Sg_0=\emptyset$, then we know from Theorem~\ref{th:regularity} that the regular part $\Sg$ of $\overline{\ptl E\cap\text{int}(M)}$ is a closed subset of $M$ with $A_f(\Sg)<\infty$ and so, it is a weighted parabolic hypersurface. In the general case we have the following statement. 

\begin{theorem}
\label{th:parabolicity}
If $E$ is a weighted isoperimetric region in a weighted manifold $(M,g,f)$, and $f$ is bounded on $E$, then the regular part $\Sg$ of the interior boundary $\overline{\ptl E\cap\emph{int}(M)}$ is a weighted parabolic hypersurface.
\end{theorem}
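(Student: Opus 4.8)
The plan is to verify the parabolicity criterion \eqref{eq:sequence} by constructing an explicit sequence of cutoff functions on $\Sg$ with vanishing weighted Dirichlet energy. The first observation is that, since $E$ has finite weighted perimeter, \eqref{eq:macorra} together with Theorem~\ref{th:regularity} gives $A_f(\Sg)=P_f(E)<\infty$; moreover, because $\Sg\subeq\ptl E\subeq\overline{E}$ and $f$ is bounded on $E$, the continuity of $f$ provides a constant $\Lambda$ with $f\leq\Lambda$ on $\Sg$. If the singular set $\Sg_0$ is empty, then $\Sg$ is a closed subset of the complete manifold $M$, hence a complete hypersurface, and the finiteness of $A_f(\Sg)$ already yields weighted parabolicity by Grigor'yan's criterion recalled above. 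The \emph{whole difficulty} is therefore the presence of $\Sg_0$, along which $\Sg$ fails to be complete, and the proof must show that this high-codimension singular set does not obstruct parabolicity.

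My strategy is to take $\var_k:=\eta_k\,\phi_k$, a product of two cutoffs: a far-field factor $\eta_k$ that handles the ends of $\Sg$ going to infinity, and a factor $\phi_k$ excising a neighbourhood of $\Sg_0$. Fix $p_0\in\Sg$ and write $R(x):=\text{dist}_M(x,p_0)$, so that $|\nabla_\Sg R|\leq 1$ on $\Sg$. For $R_k\to\infty$ set $\eta_k:=\chi(R/R_k)$, where $\chi$ is smooth with $\chi\equiv 1$ on $(-\infty,1]$, $\chi\equiv 0$ on $[2,\infty)$ and $|\chi'|\leq 2$; then $|\nabla_\Sg\eta_k|\leq 2/R_k$ is supported where $R_k\leq R\leq 2R_k$, and hence
\[
\int_\Sg|\nabla_\Sg\eta_k|^2\,da_f\leq (2/R_k)^2\,A_f(\Sg)\longrightarrow 0,
\]
using only $A_f(\Sg)<\infty$. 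The delicate point, and the \emph{main obstacle}, is to cut off near $\Sg_0$ with negligible energy, which is exactly where the hypothesis on the singular set is used: since $\Sg_0$ has Hausdorff dimension at most $n-7$, it has vanishing $(n-2)$-dimensional Hausdorff measure, and I would exploit that such a set has null weighted $2$-capacity in $\Sg$.

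Concretely, for each $k$ I cover the compact piece $\Sg_0\cap\{R\leq 2R_k\}$ by finitely many balls $B_{r_i}(p_i)$ with $\sum_i r_i^{\,n-2}<1/k$, which is possible because $\mathcal{H}^{n-2}(\Sg_0)=0$. On the corresponding union one builds, as in the classical capacity estimate, a Lipschitz function $\phi_k$ equal to $0$ near $\Sg_0\cap\{R\leq 2R_k\}$ and to $1$ outside $\bigcup_i B_{2r_i}(p_i)$, with $|\nabla_\Sg\phi_k|\lesssim 1/r_i$ on each annulus $B_{2r_i}(p_i)\setminus B_{r_i}(p_i)$. The upper density bound $\mathcal{H}^n\big(\Sg\cap B_{2r_i}(p_i)\big)\leq C\,r_i^{\,n}$, supplied by the monotonicity formula for the minimizer $E$, together with $f\leq\Lambda$ on $\Sg$, then gives
\[
\int_\Sg|\nabla_\Sg\phi_k|^2\,da_f\leq \Lambda\sum_i\frac{C'}{r_i^{\,2}}\,\mathcal{H}^n\big(\Sg\cap B_{2r_i}(p_i)\big)\leq \Lambda\,C''\sum_i r_i^{\,n-2}<\frac{\Lambda\,C''}{k}\longrightarrow 0,
\]
where the exponent $n-2$ is nonnegative precisely because $\Sg_0$ has codimension larger than $2$ in $\Sg$. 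With $\var_k:=\eta_k\,\phi_k$, the radii $r_i$ are small for large $k$, so the covering balls shrink onto $\Sg_0$ and $\var_k\equiv 1$ on any fixed compact $K\subeq\Sg$ (which lies at positive distance from $\Sg_0$ and inside some $\{R\leq R_0\}$); the support of $\var_k$ is a bounded subset of $M$ avoiding $\Sg_0$, hence compact in $\Sg$; and $|\nabla_\Sg\var_k|^2\leq 2|\nabla_\Sg\eta_k|^2+2|\nabla_\Sg\phi_k|^2$ forces $\int_\Sg|\nabla_\Sg\var_k|^2\,da_f\to 0$. After a routine smoothing of the Lipschitz distance functions this produces a sequence as in \eqref{eq:sequence}, so that $\Sg$ is weighted parabolic.
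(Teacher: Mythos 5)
Your proposal is correct and follows essentially the same route as the paper, whose proof is exactly the construction of a sequence as in \eqref{eq:sequence} (the Sternberg--Zumbrun/Bayle argument, extended to non-compact manifolds with bounded weights): a far-field cutoff controlled by $A_f(\Sg)=P_f(E)<\infty$, combined with an excision of $\Sg_0$ built from a covering with $\sum_i r_i^{n-2}$ small, which is controlled by the upper perimeter density estimates of $E$ in metric balls and the bound on $f$. The only detail to tidy up is that the density constant and the positive lower bound of $f$ (needed to pass between $\mathcal{H}^n(\Sg\cap B_{2r_i}(p_i))$ and the weighted perimeter) depend on the compact region $\{R\leq 2R_k\}$, so for each $k$ the cover of $\Sg_0\cap\{R\leq 2R_k\}$ should be chosen with $\sum_i r_i^{n-2}$ small relative to these $k$-dependent constants --- a harmless reordering of choices.
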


The proof relies on the construction of a sequence $\{\var_k\}_{k\in\nn}$ as in \eqref{eq:sequence}. This requires perimeter estimates of $E$ inside metric balls and the fact derived from Theorem~\ref{th:regularity} that the $(n-2)$-dimensional Hausdorff measure of the singular set $\Sg_0$ vanishes. The result was obtained by Sternberg and Zumbrun \cite[Lem.~2.4]{sz} when $M$ is a Euclidean bounded set with constant weight. The reader is referred to Bayle~\cite[Prop.~2.5]{bayle-paper} for the case of unweighted compact manifolds. The extension to arbitrary manifolds with bounded weights follows from Bayle~\cite[p.~125]{bayle-thesis} and the author \cite[Thm.~3.6]{rosales-gauss}.

\subsection{Stability}
\label{subsec:stability}
\noindent

We introduce stable sets as second order minima of the interior perimeter under volume-preserving deformations. These provide the most natural candidates to be isoperimetric regions. Motivated by  Theorem~\ref{th:regularity} we will consider sets with almost smooth interior boundaries.

Let $(M,g,f)$ be a weighted manifold of dimension $n+1$. We take an open set $E\sub M$, whose interior boundary verifies $\overline{\ptl E\cap\text{int}(M)}=\Sg\cup\Sg_0$, where $\Sg$ is a smooth embedded hypersurface and $\Sg_0$ is a closed set of singularities with $A_f(\Sg_0)=0$. In case $\Sg$ has non-empty boundary we suppose that $\ptl\Sg=\Sg\cap\ptl M$. We denote by $N$ the unit normal on $\Sg$ pointing into $E$, and by $\nu$ the conormal vector of $\ptl\Sg$, i.e., the inner unit normal along $\ptl\Sg$ in $\Sg$. The fact that $E$ has almost smooth interior boundary implies equality \eqref{eq:macorra}, see~\cite[Ex.~12.7]{maggi}.

Let $X$ be a smooth vector field with compact support on $M$ and tangent along $\ptl M$. The associated \emph{variation} of $E$ is the family $E_s:=\phi_s(E)$, where $\{\phi_s\}_{s\in\rr}$ is the one-parameter flow of $X$. The variation is \emph{volume-preserving} if the volume functional $V_f(s):=V_f(E_s)$ is constant for any $s$ small enough. We say that $E$ is \emph{weighted stationary} if the perimeter functional $P_f(s):=P_f(E_s)$ satisfies $P_f'(0)=0$ for any volume-preserving variation. If, in addition, $P_f''(0)\geq 0$ for any volume-preserving variation, then we say that $E$ is \emph{weighted stable}. 

Observe that $\Sg_s:=\phi_s(\Sg)$ is a smooth hypersurface with $\ptl\Sg_s=\Sg_s\cap\ptl M$ and $A_f(\Sg_s)=P_f(s)$. Hence, if $E$ is weighted stable, then $\Sg$ is an \emph{$f$-stable hypersurface}, i.e., a second order minimum of the interior weighted area under volume-preserving variations. Therefore, we can deduce the following properties, see Castro and the author~\cite[Sect.~3]{castro-rosales}.

\begin{lemma}
\label{lem:varprop}
In the previous conditions on the weighted manifold $(M,g,f)$ and the set $E$, we have
\begin{itemize}
\item[(i)] If $E$ is weighted stationary, then $\Sg$ has constant weighted mean curvature and meets $\ptl M$ orthogonally along $\ptl\Sg$.
\item[(ii)] If $E$ is weighted stable, then $\indo_f(u,u)\geq 0$ for any function $u\in C^\infty_0(\Sg)$ with $\int_\Sg u\,da_f=0$.
\end{itemize}
Moreover, if $\Sg_0=\emptyset$, then the reverse statements are true.
\end{lemma}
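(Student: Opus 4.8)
The plan is to deduce both assertions from the first and second variation formulas for the weighted perimeter along an admissible variation, and then to feed suitably chosen test fields into them. Since $E$ has almost smooth interior boundary we may use \eqref{eq:macorra} to write $P_f(s)=A_f(\Sg_s)$, so everything reduces to the deformation of the regular hypersurface $\Sg$. Writing $u:=\escpr{X,N}$ for the normal component of the variation field along $\Sg$, I would first record that the weighted divergence theorem on $\Sg$ turns the first variation of weighted area into
\[
A_f'(0)=-\int_\Sg H_f\,u\,da_f-\int_{\ptl\Sg}\escpr{X,\nu}\,d\ell_f,
\]
where $H_f$ is the weighted mean curvature of \eqref{eq:fmc} and $d\ell_f$ is the weighted length element on $\ptl\Sg$; the boundary term appears because the tangential part of $X$ is integrated out along $\ptl\Sg$. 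In the same way the first variation of weighted volume is $V_f'(0)=\int_\Sg u\,da_f$ (up to the author's sign convention), so a variation is volume-preserving precisely when this mean value vanishes to first order.

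For part (i) I would argue in two steps. Taking first variation fields compactly supported in the interior of $\Sg$ (hence away from both $\ptl\Sg$ and $\Sg_0$) with mean-zero normal component $u$, stationarity gives $\int_\Sg H_f\,u\,da_f=0$ for every such $u$, so $H_f$ is orthogonal in $L^2(da_f)$ to all mean-zero functions and is therefore constant on $\Sg$. Inserting this back, for a general volume-preserving field tangent to $\ptl M$ the interior term drops out and stationarity reduces to $\int_{\ptl\Sg}\escpr{X,\nu}\,d\ell_f=0$. Since any prescribed $\ptl M$-tangent field along $\ptl\Sg$ can be realized by some admissible $X$ whose normal component is corrected to mean zero in the interior, this identity holds for all such boundary data; decomposing $\nu$ into its part tangent to $\ptl M$ and its component along the inner unit normal $\bar\nu$ of $\ptl M$, and using $\escpr{X,\bar\nu}=0$, I conclude that the $\ptl M$-tangent part of $\nu$ vanishes, i.e. $\nu=\pm\bar\nu$. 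This is precisely orthogonality of $\Sg$ and $\ptl M$ along $\ptl\Sg$.

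For part (ii) I would pass to the second variation. For a volume-preserving variation of a stationary set the second derivative of the weighted perimeter reduces, after absorbing the tangential and acceleration contributions by means of the stationarity conditions from (i), to the weighted index form, giving $P_f''(0)=\indo_f(u,u)$ with $u=\escpr{X,N}$ again of mean zero. Weighted stability then yields $\indo_f(u,u)\geq 0$ for every $u\in C^\infty_0(\Sg)$ with $\int_\Sg u\,da_f=0$, since any such $u$ can be realized as the normal component of a genuinely volume-preserving deformation supported away from $\Sg_0$. The reverse statements under $\Sg_0=\emptyset$ follow by reading the same two identities backwards: when the singular set is empty, $\Sg$ is a closed hypersurface of finite weighted area, so every volume-preserving variation has normal component $u\in C^\infty_0(\Sg)$ of mean zero with $P_f'(0)=0$ and $P_f''(0)=\indo_f(u,u)$; hence constancy of $H_f$ together with orthogonality give stationarity, and $\indo_f\geq 0$ gives stability.

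The main obstacle, and the step I would treat most carefully, is establishing the identity $P_f''(0)=\indo_f(u,u)$ for genuinely volume-preserving variations. Two points are delicate: first, that one may correct a first-order volume-preserving field to an exactly volume-preserving one without altering $u$ or the value of the second variation, which uses the constancy of $H_f$ proved in (i); and second, that when $\Sg_0\neq\emptyset$ the correspondence between mean-zero test functions and admissible variations is only available for $u$ compactly supported in the regular part, so the construction must keep the deformation away from the singular set. Once these formulas are in place, the remaining steps are the routine interior/boundary test-field manipulations sketched above, and the whole lemma is a direct reading of the variation formulas of Castro and the author~\cite{castro-rosales}.
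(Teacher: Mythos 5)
Your proposal is correct and takes essentially the same route as the paper: the paper's own argument is precisely the reduction $P_f(s)=A_f(\Sg_s)$ for deformations of the regular part, followed by an appeal to the first and second variation formulas of \cite[Sect.~3]{castro-rosales}, which are exactly the formulas and test-field manipulations you reconstruct. The technical points you flag (correcting a first-order volume-preserving field to an exactly volume-preserving one, using stationarity so that $P_f''(0)$ depends only on the normal component $u$, and keeping all deformations away from $\Sg_0$ so that the singular set neither moves nor contributes to perimeter) are the same ones delegated to that reference, and the converse direction indeed requires $\Sg_0=\emptyset$ for the reason you give.
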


Let us clarify some terminology and notation in the previous statement. The \emph{weighted mean curvature} of a hypersurface $\Sg$ with a smooth unit normal $N$ is the function $H_f$ introduced by Gromov~\cite[Sect.~9.4.E]{gromov-GAFA}, see also Bayle~\cite[Sect.~3.4.2]{bayle-thesis}, by equality
\begin{equation}
\label{eq:fmc}
H_f:=nH-\escpr{\nabla\psi,N},
\end{equation}
where $H$ is the Riemannian mean curvature of $\Sg$ with respect to $N$ and $\psi:=\text{log}(f)$. The \emph{weighted index form} of $\Sg$ is the symmetric bilinear form on $C_0^\infty(\Sg)$ given by
\begin{equation}
\label{eq:index1}
\indo_f(u,w):=\int_\Sg\left\{\escpr{\nabla_\Sg u,\nabla_\Sg w}-\big(\ric_f(N,N)+|\sigma|^{2}\big)\,u\,w\right\}da_{f}
-\int_{\ptl\Sg}\text{II}(N,N)\,u\,w\,dl_f.
\end{equation}
Here $\ric_f$ stands for the Bakry-\'Emery-Ricci tensor in \eqref{eq:fricci},  $|\sg|^2$ is the squared norm of the second fundamental form of $\Sg$, the symbol $\text{II}$ denotes the second fundamental form of $\ptl M$ with respect to the inner normal and $dl_f:=f\,dl$, where $dl$ is the $(n-1)$-dimensional Hausdorff measure in $(M,g)$. When $\ptl\Sg=\emptyset$ we adopt the convention that all the integrals along $\ptl\Sg$ vanish.

In the next examples we employ Lemma~\ref{lem:varprop} to characterize the stability in a particular situation, which includes horizontal half-spaces of Riemannian cylinders with product weights. Related results in Euclidean space were obtained by Barthe, Bianchini and Colesanti~\cite[Sect.~3]{chiara}, Doan~\cite[Thm.~5.1]{calibrations}, and Brock, Chiacchio and Mercaldo~\cite[Thm.~2.1]{bcm3}.

\begin{example}
\label{ex:stable}
Suppose that $E$ is a weighted stationary set such that $\Sg_0=\emptyset$ and $\Sg$ is a totally geodesic hypersurface with $\ric_f(N,N)=c$ on $\Sg$ and $\text{II}(N,N)=0$ along $\ptl\Sg$. Note that
\[
\indo_f(u,u)=\int_{\Sg}\big\{|\nabla_{\Sg}u|^2-c\,u^2\big\}\,da_f.
\]
Therefore $E$ is weighted stable if and only if $\la_f(\Sg)\geq c$, where $\la_f(\Sg)$ is the Poincar\'e constant of $\Sg$ with respect to the weight $f$ restricted to $\Sg$. 
\end{example}

\begin{remark} 
The \emph{Poincar\'e constant} of a complete weighted manifold $(M,g,f)$ is defined by
\begin{equation}
\label{eq:poincare}
\la_f(M):=\inf\left\{\frac{\int_M|\nabla u|^2\,dv_f}{\int_M u^2\,dv_f};\,u\in C^\infty_0(M),u\neq 0, \int_M u\,dv_f=0\right\}.
\end{equation}
An approximation argument shows that we can replace $C^\infty_0(M)$ with $H^1(M,dv_f)$ in the previous definition when $(M,g)$ is complete and $V_f(M)<\infty$. 
\end{remark}

\begin{example}
\label{ex:horstable}
Let $(M,g,f)$ be a Riemannian cylinder $\Om\times\rr$ with product weight $f(x,t):=e^{h(x)}\,e^{v(t)}$. It follows from \eqref{eq:fmc} that any horizontal slice $\Om_t:=\Om\times\{t\}$ has constant weighted mean curvature $H_f=v'(t)$ with respect to the unit normal $N=-\xi$. Thus, any lower horizontal half-space $\Om\times(-\infty,t)$ is a weighted stationary set. Note also that $\Om_t$ is totally geodesic, $\text{II}(\xi,\xi)=0$ along $\ptl\Om_t$ and $\ric_f(\xi,\xi)=-v''(t)$ on $\Om_t$ by equation~\eqref{eq:riccidesc}. From Example~\ref{ex:stable}, and using that  $\la_f(\Om_t)$ coincides with the Poincar\'e constant of $\Om$ with respect to $e^h$, we conclude that $\Om\times(-\infty,t)$ is weighted stable if and only if $\la_h(\Om)\geq -v''(t)$. The same criterion characterizes the stability of the upper horizontal half-space $\Om\times (t,\infty)$. 
\end{example}

Coming back to the weighted index form, an integration by parts formula~\cite[Eq.~(2.7)]{castro-rosales} yields 
\[
\mathcal{I}_f(u,w)=\mathcal{Q}_f(u,w), \ \text{ for any } u,w\in C^\infty_0(\Sg),
\]
where 
\begin{equation}
\label{eq:index2}
\mathcal{Q}_f(u,w):=-\int_\Sg u\,\mathcal{L}_f(w)\,da_f
-\int_{\ptl\Sg}u\,\left\{\frac{\ptl w}{\ptl\nu}+\text{II}(N,N)\,w\right\}dl_f.
\end{equation}
In the previous equation $\ptl w/\ptl\nu$ is the derivative of $w$ in the direction of $\nu$, and $\mathcal{L}_f$ is the \emph{weighted Jacobi operator of $\Sg$}, i.e., the second order linear operator defined by
\begin{equation}
\label{eq:jacobi}
\mathcal{L}_f(w):=\Delta_{\Sg,f}w+\left(\text{Ric}_f(N,N)+|\sg|^2\right)w.
\end{equation}
Here $\Delta_{\Sg,f}$ represents the \emph{weighted Laplacian} in $\Sg$. This is related to the Riemannian Laplacian $\Delta_\Sg$ by means of equality
\[
\Delta_{\Sg,f}w:=\Delta_\Sg w+\escpr{\nabla_\Sg\psi,\nabla_\Sg w}.
\]
For later use we recall the following equality, which shows a geometric interpretation of the weighted Jacobi operator, see \cite[Eq.~(3.5)]{castro-rosales} 
\begin{equation}
\label{eq:hfprima}
\big(\mathcal{L}_f (w)\big)(p)=\frac{d}{ds}\bigg|_{s=0}(H_f)_s(\phi_s(p)), 
\ \text{ for any } p\in\Sg.
\end{equation}
Here $(H_f)_s$ is the weighted mean curvature of the hypersurface $\Sg_s:=\phi_s(\Sg)$ for the variation of $E$ associated to a vector field $X$ tangent to $\ptl M$ and such that $\escpr{X,N}=w$ along $\Sg$. 

Observe that the symmetry property $\ind_f(u_1,u_2)=\ind_f(u_2,u_1)$ for $u_1,u_2\in C^\infty_0(\Sg)$ is equivalent to the identity
\begin{equation}
\label{eq:symmetry}
\int_\Sg \left\{u_1\,\mathcal{L}_f(u_2)-u_2\,\mathcal{L}_f(u_1)\right\}da_f
=\int_{\ptl\Sg}\left\{u_2\,\frac{\ptl u_1}{\ptl\nu}-u_1\,\frac{\ptl u_2}{\ptl\nu}\right\}dl_f.
\end{equation} 
When $\Sg$ is a weighted parabolic hypersurface, by taking a sequence $\{\var_k\}_{k\in\nn}$ as in \eqref{eq:sequence}, and using an approximation argument as in \cite[Lem.~2.4]{rosales-gauss} we see that equality \eqref{eq:symmetry} also holds for any bounded functions $u_1,u_2\in C^\infty(\Sg)\cap H^1(\Sg,da_f)$ such that $\Delta_{\Sg,f}u_i\in L^1(\Sg,da_f)$ and $\ptl u_i/\ptl\nu\in L^1(\ptl\Sg,dl_f)$, for any $i=1,2$. Here $L^1(\Sg,da_f)$, $L^1(\ptl\Sg,dl_f)$ and $H^1(\Sg,da_f)$ denote the corresponding spaces of integrable and Sobolev functions with respect to $da_f$ and $dl_f$. 

Motivated by Theorem~\ref{th:parabolicity} we are led to consider stable sets where the regular part of the interior boundary is a weighted parabolic hypersurface. In this situation the inequality in Lemma~\ref{lem:varprop} (ii) can be extended to more general functions that may not vanish on $\Sg_0$. This is done by assuming the conditions $\ric_f\geq 0$ and $\ptl M$ locally convex, under which the stability inequality becomes more restrictive since it contains two non-positive terms. The resulting statement is the following.

\begin{proposition}
\label{prop:extended}
Suppose that $\ptl M$ is locally convex and the weight $f$ satisfies $\emph{Ric}_f\geq 0$. If $E$ is a weighted stable set of finite weighted perimeter and such that $\Sg$ is weighted parabolic, then
\begin{itemize}
\item[(i)] For any bounded function $u\in H^1(\Sg,da_f)$ with $\int_\Sg u\,da_f=0$, we have $\big(\emph{Ric}_f(N,N)+|\sg|^2\big)\,u^2\in L^1(\Sg,da_f)$, $\emph{II}(N,N)\,u^2\in L^1(\ptl\Sg,dl_f)$ and
\[
\indo_f(u,u)\geq 0,
\]
where $\indo_f$ is the weighted index form in \eqref{eq:index1}.
\item[(ii)] For any bounded function $u\in C^\infty(\Sg)\cap H^1(\Sg,da_f)$ with $\Delta_{\Sg,f}u\in L^1(\Sg,da_f)$, $\ptl u/\ptl\nu\in L^1(\ptl\Sg,dl_f)$ and $\int_\Sg u\,da_f=0$, we have $(\emph{Ric}_f(N,N)+|\sg|^2)\,u^2\in L^1(\Sg,da_f)$, $\emph{II}(N,N)\,u^2\in L^1(\ptl\Sg,dl_f)$ and 
\[
\ind_f(u,u)\geq 0,
\]
where $\ind_f$ is defined in \eqref{eq:index2}. 
\end{itemize}
Moreover, in both cases $\emph{Ric}_f(N,N)+|\sg|^2\in L^1(\Sg,da_f)$ and $\emph{II}(N,N)\in L^1(\ptl\Sg,dl_f)$.
\end{proposition}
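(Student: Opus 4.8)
The plan is to upgrade the stability inequality of Lemma~\ref{lem:varprop}~(ii), which is only available for compactly supported mean zero functions, to the larger class in the statement by a cut-off approximation that exploits the sign conditions. Two facts drive everything. First, since $E$ has finite weighted perimeter, equality~\eqref{eq:macorra} gives $A_f(\Sg)=P_f(E)<\infty$, so every bounded function lies in $L^1(\Sg,da_f)\cap L^2(\Sg,da_f)$. Second, local convexity of $\ptl M$ together with $\ric_f\geq 0$ makes the two quantities $A:=\ric_f(N,N)+|\sg|^2$ and $B:=\text{II}(N,N)$ non-negative. Given a bounded $u\in H^1(\Sg,da_f)$ with $\int_\Sg u\,da_f=0$, I take a sequence $\{\var_k\}_{k\in\nn}$ as in~\eqref{eq:sequence} coming from the weighted parabolicity of $\Sg$, set $c_k:=\frac{\int_\Sg\var_k u\,da_f}{\int_\Sg\var_k\,da_f}$ and $w_k:=\var_k\,(u-c_k)$. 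By dominated convergence $\int_\Sg\var_k u\,da_f\to\int_\Sg u\,da_f=0$ and $\int_\Sg\var_k\,da_f\to A_f(\Sg)>0$, hence $c_k\to 0$. Each $w_k$ is compactly supported in $\Sg$ with $\int_\Sg w_k\,da_f=0$, and since $A$ and $B$ are continuous, hence bounded, on the compact set $\operatorname{supp}\var_k\sub\Sg$, a routine density argument makes $w_k$ admissible in Lemma~\ref{lem:varprop}~(ii), so $\indo_f(w_k,w_k)\geq 0$.

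The first key step is the gradient limit $\int_\Sg|\nabla_\Sg w_k|^2\,da_f\to\int_\Sg|\nabla_\Sg u|^2\,da_f$. Expanding $\nabla_\Sg w_k=(u-c_k)\,\nabla_\Sg\var_k+\var_k\,\nabla_\Sg u$, the term carrying $|\nabla_\Sg\var_k|^2$ is bounded by $(\sup|u|+|c_k|)^2\int_\Sg|\nabla_\Sg\var_k|^2\,da_f\to 0$, the cross term is killed by Cauchy--Schwarz together with $u\in H^1(\Sg,da_f)$, and $\int_\Sg\var_k^2|\nabla_\Sg u|^2\,da_f\to\int_\Sg|\nabla_\Sg u|^2\,da_f$ by dominated convergence. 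Feeding this into $\indo_f(w_k,w_k)\geq 0$, that is, into $\int_\Sg|\nabla_\Sg w_k|^2\,da_f\geq\int_\Sg A\,w_k^2\,da_f+\int_{\ptl\Sg}B\,w_k^2\,dl_f$, and using that both right-hand integrals are non-negative while $w_k^2\to u^2$ pointwise, Fatou's lemma yields $A\,u^2\in L^1(\Sg,da_f)$, $B\,u^2\in L^1(\ptl\Sg,dl_f)$ and $\int_\Sg A\,u^2\,da_f+\int_{\ptl\Sg}B\,u^2\,dl_f\leq\int_\Sg|\nabla_\Sg u|^2\,da_f$. This settles the integrability claims of (i).

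To reach the \emph{Moreover} part I apply this integrability to one tailored function. I fix a large relatively compact $K\sub\Sg$ (so $K\cap\Sg_0=\emptyset$) and a smooth bounded $u$ equal to $1$ on $\Sg\setminus K$ with $\int_\Sg u\,da_f=0$; such a $u$ exists because $A_f(\Sg\setminus K)\to 0$ as $K$ exhausts $\Sg$, so the mean can be corrected on $K$ with bounded $u$. Then $A\,u^2\in L^1(\Sg,da_f)$ forces $\int_{\Sg\setminus K}A\,da_f<\infty$, while $A$ is integrable on the compact $K$ by continuity, whence $A\in L^1(\Sg,da_f)$; the same $u$ is $1$ on $\ptl\Sg\setminus K$ and $B$ is continuous on the compact $\ptl\Sg\cap K$, giving $B\in L^1(\ptl\Sg,dl_f)$. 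With $A\in L^1(\Sg,da_f)$ and $B\in L^1(\ptl\Sg,dl_f)$ now in hand, the dominations $A\,w_k^2\leq(\sup|u|+\sup_k|c_k|)^2A$ and its boundary analogue allow dominated convergence, so $\int_\Sg A\,w_k^2\,da_f\to\int_\Sg A\,u^2\,da_f$ and $\int_{\ptl\Sg}B\,w_k^2\,dl_f\to\int_{\ptl\Sg}B\,u^2\,dl_f$. Combined with the gradient limit this gives $\indo_f(w_k,w_k)\to\indo_f(u,u)$, and passing to the limit in $\indo_f(w_k,w_k)\geq 0$ finishes (i).

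Part (ii) then follows from (i): its hypotheses on $u$ are stronger than those in (i), so the integrability and $\indo_f(u,u)\geq 0$ already hold, and it only remains to identify $\ind_f(u,u)$ with $\indo_f(u,u)$. Writing $\mathcal{L}_f(u)=\Delta_{\Sg,f}u+A\,u$ in~\eqref{eq:index2}, this identity is equivalent to the weighted Green identity $\int_\Sg|\nabla_\Sg u|^2\,da_f=-\int_\Sg u\,\Delta_{\Sg,f}u\,da_f-\int_{\ptl\Sg}u\,(\ptl u/\ptl\nu)\,dl_f$, whose integrands are summable under the hypotheses of (ii) and which holds for such $u$ by a cut-off approximation like the one extending the symmetry identity~\eqref{eq:symmetry} to weighted parabolic $\Sg$. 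I expect the main obstacle to be exactly the interplay between integrability and convergence: one cannot pass to the limit in the stability inequality before knowing $A,B\in L^1$, yet that integrability is part of the conclusion. The resolution is the two-stage scheme above — the sign conditions plus Fatou first yield $A\,u^2,\,B\,u^2\in L^1$, the single test function equal to $1$ off a compact set upgrades this to $A,B\in L^1$, and only afterwards does dominated convergence close the inequality.
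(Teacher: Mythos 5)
Your proposal is correct and is essentially the paper's own proof, which is exactly the approximation argument the paper cites from the Euclidean case \cite[Prop.~4.2]{rosales-gauss}: cut off with the parabolicity sequence $\{\var_k\}$, correct the mean, exploit the non-negativity of $\ric_f(N,N)+|\sg|^2$ and $\text{II}(N,N)$ (coming from $\ric_f\geq 0$ and the local convexity of $\ptl M$) via Fatou and dominated convergence, and reduce (ii) to (i) through the parabolic extension of the integration-by-parts identity, in the same way the paper extends \eqref{eq:symmetry}. One minor observation: your final dominated-convergence stage in (i) is redundant, since the Fatou inequality $\int_\Sg\big(\ric_f(N,N)+|\sg|^2\big)\,u^2\,da_f+\int_{\ptl\Sg}\text{II}(N,N)\,u^2\,dl_f\leq\int_\Sg|\nabla_\Sg u|^2\,da_f$ is already the statement $\indo_f(u,u)\geq 0$; the tailored test function equal to $1$ off a compact set is needed only for the ``Moreover'' integrability claims.
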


The proof of this proposition can be derived as in the Euclidean case~\cite[Prop.~4.2]{rosales-gauss} by means of an approximation argument that employs a sequence $\{\var_k\}_{k\in\nn}$ satisfying \eqref{eq:sequence}.

Next, we show some geometric and topological properties of stable sets that come immediately from Proposition~\ref{prop:extended} (i) by inserting constant functions in the weighted index form.

\begin{corollary}
\label{cor:connected}
Suppose that $\ptl M$ is locally convex and the weight $f$ satisfies $\emph{Ric}_f\geq 0$. If $E$ is a weighted stable set of finite weighted perimeter and $\Sg$ is weighted parabolic, then $\Sg$ is connected, or $\Sg$ is a totally geodesic hypersurface with $\emph{Ric}_f(N,N)=0$ on $\Sg$ and $\emph{II}(N,N)=0$ along $\ptl\Sg$.
\end{corollary}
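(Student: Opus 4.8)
The plan is to exploit the extended stability inequality of Proposition~\ref{prop:extended}~(i) with a locally constant test function that detects the disconnectedness of $\Sg$. The dichotomy in the statement is handled by assuming $\Sg$ is \emph{not} connected and deducing the rigid alternative. So first I would split $\Sg$ as a disjoint union $\Sg=\Sg_1\cup\Sg_2$ of two non-empty subsets, each a union of connected components of $\Sg$ (hence relatively open and closed). Since $E$ has finite weighted perimeter, equality \eqref{eq:macorra} gives $A_f(\Sg)<\infty$, so both weighted areas $A_f(\Sg_i)$ are finite, and they are positive because each $\Sg_i$ is a non-empty open piece of a smooth hypersurface.

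Next I would consider the function $u$ defined by $u:=A_f(\Sg_2)$ on $\Sg_1$ and $u:=-A_f(\Sg_1)$ on $\Sg_2$. Because $\Sg_1$ and $\Sg_2$ are disjoint relatively open subsets, $u$ is locally constant, hence smooth with $\nabla_\Sg u=0$; it is bounded, it belongs to $H^1(\Sg,da_f)$ since $A_f(\Sg)<\infty$, and by construction $\int_\Sg u\,da_f=A_f(\Sg_2)\,A_f(\Sg_1)-A_f(\Sg_1)\,A_f(\Sg_2)=0$. Thus $u$ is an admissible competitor for Proposition~\ref{prop:extended}~(i). Plugging it into the weighted index form \eqref{eq:index1}, and using $\nabla_\Sg u=0$, I obtain
\[
\indo_f(u,u)=-\int_\Sg\big(\ric_f(N,N)+|\sg|^2\big)\,u^2\,da_f-\int_{\ptl\Sg}\text{II}(N,N)\,u^2\,dl_f,
\]
both integrals being finite by the integrability statement in the same proposition.

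Now I would close the argument by a sign comparison. Under the hypotheses $\ric_f\geq 0$ and $\text{II}\geq 0$ the two integrands above are non-negative, so $\indo_f(u,u)\leq 0$; on the other hand Proposition~\ref{prop:extended}~(i) gives $\indo_f(u,u)\geq 0$. Hence $\indo_f(u,u)=0$, and since $u^2>0$ everywhere on $\Sg$ (both values $A_f(\Sg_i)$ are strictly positive), each integrand vanishes identically. This forces $\ric_f(N,N)+|\sg|^2=0$ on $\Sg$ and $\text{II}(N,N)=0$ along $\ptl\Sg$. As $\ric_f(N,N)\geq 0$ and $|\sg|^2\geq 0$, their sum vanishes only when $\ric_f(N,N)=0$ and $|\sg|^2=0$ on $\Sg$, the latter meaning that $\Sg$ is totally geodesic. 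This is exactly the stated rigid alternative.

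The argument is almost entirely formal once the test function is in place, so I do not expect a serious obstacle. The only point requiring a little care is verifying that $u$ is a genuine admissible competitor in Proposition~\ref{prop:extended}~(i): this amounts to the finiteness $A_f(\Sg_i)<\infty$ (from $P_f(E)<\infty$ via \eqref{eq:macorra}) and the positivity $A_f(\Sg_i)>0$, which guarantees both $u\in H^1(\Sg,da_f)$ and $u^2>0$ everywhere, the latter being what turns the vanishing of the integrals into the pointwise rigidity conclusions. The heavy lifting, namely the extension of the stability inequality to mean-zero functions that need not have compact support, has already been carried out in Proposition~\ref{prop:extended}, so here it suffices to insert the right constant on each component.
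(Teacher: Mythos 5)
Your proof is correct and is essentially the paper's own argument: the paper derives Corollary~\ref{cor:connected} precisely by inserting locally constant mean-zero functions into the extended stability inequality of Proposition~\ref{prop:extended}~(i), which is exactly your test function $u=A_f(\Sg_2)$ on $\Sg_1$, $u=-A_f(\Sg_1)$ on $\Sg_2$. The sign comparison forcing $\indo_f(u,u)=0$ and the pointwise vanishing of $\ric_f(N,N)+|\sg|^2$ and $\emph{II}(N,N)$ is the intended conclusion, so nothing is missing.
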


As a consequence, we infer the connectivity of $\Sg$ when $\ric_f>0$. In the more general case $\ric_f\geq 0$ the hypersurface $\Sg$ need not be connected.

\begin{example}
\label{ex:disconnected}
Consider a Riemannian cylinder $\Om\times\rr$ with product weight $f(x,t):=e^{h(x)}$ such that $\ptl\Om$ is locally convex and $\ric_\Om\geq 0$. Then, according to Lemma~\ref{lem:varprop} (ii) and the computations in Example~\ref{ex:horstable}, any set $E$ for which the interior boundary is the union of finitely many horizontal slices is weighted stable. 
\end{example}

We finish this section with a criterion for weighted stability under our convexity assumptions.

\begin{example}
\label{ex:starig}
Suppose that $\ptl M$ is locally convex and $\ric_f\geq c\geq 0$. Consider a weighted stationary set $E$ in $M$ such that $\Sg_0=\emptyset$ and $\Sg$ is a connected totally geodesic hypersurface with $\ric_f(N,N)=c$ on $\Sg$ and $\text{II}(N,N)=0$ along $\ptl\Sg$. If $c=0$, then $E$ is weighted stable by Lemma~\ref{lem:varprop} (ii) and equation~\eqref{eq:index1}. Suppose that $c>0$ and $(M,g)$ has non-positive sectional curvature with respect to any plane containing the normal $N$ and a tangent direction to $\Sg$. In this situation, the restriction of $f$ to $\Sg$ provides a weight with Bakry-\'Emery-Ricci curvature $\ric^\Sg_f\geq c$. Moreover $\ptl\Sg$ is locally convex since $\ptl M$ is locally convex and $\Sg$ meets $\ptl M$ orthogonally along $\ptl\Sg$. Both conditions imply the estimate $\la_f(\Sg)\geq c$, see for instance \cite[Sect.~2.3]{kolesnikov-milman2} and the references therein. From Example~\ref{ex:stable} we conclude that $E$ is weighted stable.
\end{example}

\section{Main stability result}
\label{sec:stable}
\setcounter{equation}{0}

In this section we establish rigidity properties for stable sets in some weighted manifolds with locally convex boundary. This is done in Theorem~\ref{th:stable}, which extends to a more general setting the stability result in \cite[Thm.~4.6]{rosales-gauss} for Euclidean half-spaces or slabs endowed with certain perturbations of the Gaussian weight. For the proof we will use the inequality in Proposition~\ref{prop:extended} (ii) with a suitable function $u$. Motivated by the Euclidean situation, where we employed the test function $u:=\alpha+\escpr{\eta,N}$ for some $\alpha\in\rr$ and $\eta$ tangent to the boundary hyperplanes, in the present proof we will replace the constant vector $\eta$ with a unit parallel vector field $X$ tangent to $\ptl M$. 

Recall that a vector field $X$ on a Riemannian manifold $(M,g)$ is \emph{parallel} if $\nabla_UX=0$ for any vector field $U$. Here $\nabla$ stands for the Levi-Civita connection in $(M,g)$. It is clear that the Riemannian length $|X|$ of a parallel vector field $X$ is constant on $M$. Thus, the existence of non-vanishing parallel vector fields is equivalent to the existence of unit parallel vector fields. 

The next lemma gathers some computations involving parallel vector fields that will be useful in the proof of Theorem~\ref{th:stable}.

\begin{lemma}
\label{lem:eigen}
Let $(M,g,f)$ be a weighted manifold of dimension $n+1$. Suppose that $X$ is a parallel vector field on $M$ which is tangent to $\ptl M$ and has one-parameter flow $\{\phi_s\}_{s\in (-\eps,\eps)}$ for some $\eps>0$. For a hypersurface $\Sg\sub M$ with smooth unit normal $N$ we define the function $\var:=\escpr{X,N}$.
\begin{itemize}
\item[(i)] If $\Sg$ has constant weighted mean curvature, then
\[
\mathcal{L}_f(\var)=\emph{Ric}_f(X,N),
\]
where $\mathcal{L}_f$ is the weighted Jacobi operator defined in \eqref{eq:jacobi} and \emph{$\ric_f$} is the Bakry-\'Emery-Ricci tensor in \eqref{eq:fricci}. 
\vspace{0,1cm}
\item[(ii)] If $\Sg$ has boundary $\ptl\Sg=\Sg\cap\ptl M$ and meets $\ptl M$ orthogonally, then
\[
\frac{\ptl\var}{\ptl\nu}=-\emph{II}(N,N)\,\var,
\]
where $\nu$ is the conormal vector along $\ptl\Sg$ and $\emph{II}$ is the second fundamental form of $\ptl M$ with respect to the inner unit normal.
\end{itemize}
\end{lemma}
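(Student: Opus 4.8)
The plan is to prove both identities by direct computation using the defining property $\nabla_U X = 0$ of a parallel vector field, together with the fact that $\Sg$ meets $\ptl M$ orthogonally. Throughout I write $\var := \escpr{X,N}$ and I let $\{e_1,\dots,e_n\}$ be a local orthonormal frame of $\Sg$, so that $\{e_1,\dots,e_n,N\}$ is an orthonormal frame of $M$ along $\Sg$. Since $X$ is globally defined on $M$ I may freely differentiate $\escpr{X,N}$ in any tangent direction, using that $X$ itself is parallel so only the derivatives of $N$ contribute.

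For part (i), I would begin by computing the weighted gradient and Laplacian of $\var$ on $\Sg$. For a tangent direction $e_i$, parallelism gives $e_i(\var) = \escpr{X,\nabla_{e_i}N} = -\escpr{X, \sg(e_i)}$, where $\sg$ is the shape operator, so $\nabla_\Sg\var$ is essentially $-\sg(X^\top)$ with $X^\top$ the tangential part of $X$. Iterating and summing the second covariant derivatives over the frame produces a Simons-type expression. The key ingredient is the Codazzi equation, which converts the divergence of the shape operator into a term with the ambient Ricci tensor plus the gradient of the mean curvature; because $\Sg$ has \emph{constant} weighted mean curvature $H_f$, the latter contribution is controlled and matches the $\escpr{\nabla\psi,N}$ piece hidden in $\Delta_{\Sg,f}$. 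After collecting the $|\sg|^2\var$ term and the Ricci term and folding in the $-\nabla^2\psi$ correction coming from the weighted Laplacian versus the Riemannian one, the whole expression should collapse to $\mathcal{L}_f(\var) = \ric_f(X,N)$, with the definition \eqref{eq:fricci} of $\ric_f$ absorbing the Hessian-of-$\psi$ term exactly. The cleanest route is probably to invoke the known unweighted Jacobi-field identity $\Delta_\Sg\escpr{X,N} + (|\sg|^2 + \ric(N,N))\escpr{X,N} = 0$ for a minimal-type hypersurface and a parallel $X$, and then carefully track the correction terms introduced by the weight $f$ and by the constancy of $H_f$ rather than of $H$.

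For part (ii), the computation is shorter and more geometric. Along $\ptl\Sg$ the conormal $\nu$ coincides, up to sign conventions, with the inner unit normal of $\ptl M$, because $\Sg$ meets $\ptl M$ orthogonally: the orthogonality forces $N$ to be tangent to $\ptl M$ and $\nu$ to be normal to $\ptl M$. Then $\frac{\ptl\var}{\ptl\nu} = \escpr{X,\nabla_\nu N}$ by parallelism. Since $N$ is tangent to $\ptl M$ and $X$ is also tangent to $\ptl M$, I can rewrite $\escpr{X,\nabla_\nu N}$ in terms of the second fundamental form $\text{II}$ of $\ptl M$: decomposing $\nabla_\nu N$ into its $\ptl M$-tangential and normal parts and pairing against $X$ (which is $\ptl M$-tangential), only the part measuring how $N$ bends off $\ptl M$ survives, and this is precisely $-\text{II}(N,N)$ times the $\nu$-component of $X$. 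The final step uses that, again by orthogonality, the $\nu$-component of $X$ equals $\escpr{X,N}$ after accounting for the frame relations along $\ptl\Sg$, yielding $\frac{\ptl\var}{\ptl\nu} = -\text{II}(N,N)\,\var$.

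The main obstacle I anticipate is part (i): the bookkeeping of how the weight modifies the classical identity. One must be scrupulous about which Laplacian and which mean curvature appear, since the weighted Jacobi operator $\mathcal{L}_f$ in \eqref{eq:jacobi} contains $\Delta_{\Sg,f}$ and $\ric_f$ rather than their unweighted analogues, and the constancy hypothesis is on $H_f$ not on $H$. The risk is a spurious leftover term involving $\nabla^2\psi$ evaluated on tangential directions; this should be exactly the term that reconciles $\ric(X,N)$ with $\ric_f(X,N)$, but verifying the cancellation cleanly is where the care is needed. Part (ii), by contrast, is essentially a one-line consequence of parallelism and orthogonality once the identification of $\nu$ with the $\ptl M$-normal is made precise.
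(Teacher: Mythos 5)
Your plan for part (i) is a genuinely different (and viable) route from the paper's, but your sketch of part (ii) contains a real gap, so I address that first. Two of your assertions in (ii) are false. Since $X$ is tangent to $\ptl M$ and, by orthogonality, $\nu$ coincides with the inner unit normal $\eta$ of $\ptl M$ along $\ptl\Sg$, the $\nu$-component of $X$ is $\escpr{X,\nu}=\escpr{X,\eta}=0$; it does \emph{not} equal $\escpr{X,N}$, so your final identification as written would give $\ptl\var/\ptl\nu=0$. Likewise, pairing $\nabla_\nu N$ with the $\ptl M$-tangential vector $X$ kills exactly the component of $\nabla_\nu N$ normal to $\ptl M$ (the part ``measuring how $N$ bends off $\ptl M$''), i.e.\ the opposite of what you claim survives. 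The deeper problem is that $\nabla_\nu N$ is a derivative in the direction \emph{normal} to $\ptl M$, and $\text{II}$ only encodes derivatives in $\ptl M$-tangential directions, so no direct rewriting of $\escpr{X,\nabla_\nu N}$ in terms of $\text{II}$ is possible. The missing step---which is the actual content of the paper's proof---is a swap of derivatives: for $w$ tangent to $\ptl\Sg$ one has $\escpr{w,\nabla_\nu N}=\escpr{\nu,\nabla_w N}$ by the symmetry of the second fundamental form of $\Sg$ (the paper packages this as $\text{II}(w,N)+\sg(w,\nu)=0$, obtained by differentiating $\escpr{\eta,N}=0$ along $\ptl\Sg$), and the right-hand side \emph{is} $\text{II}$-data because $w$ and $N$ are tangent to $\ptl M$. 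With this swap your decomposition does close: writing $X=(X-\var N)+\var N$ with $X-\var N$ tangent to $\ptl\Sg$, one gets $\ptl\var/\ptl\nu=\escpr{X-\var N,\nabla_\nu N}=\text{II}(X-\var N,N)=\text{II}(X,N)-\var\,\text{II}(N,N)=-\text{II}(N,N)\,\var$, where $\text{II}(X,N)=0$ follows from $\nabla_N X=0$ and $\escpr{X,\eta}\equiv 0$ on $\ptl M$. So (ii) is not a one-liner from parallelism and orthogonality alone.

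Concerning part (i): the paper avoids your Simons/Codazzi computation entirely. Since a parallel field is Killing, its flow $\phi_s$ consists of isometries, so the Riemannian mean curvature of $\Sg_s=\phi_s(\Sg)$ is constant along the flow; plugging this into the linearization formula \eqref{eq:hfprima} from \cite{castro-rosales}, only the weight term $-\escpr{\nabla\psi,N_s}$ varies, whence $\mathcal{L}_f(\var)=-(\nabla^2\psi)(X,N)-\escpr{\nabla\psi,\nabla_X\overline{N}}=\ric_f(X,N)$, using $[X,\overline{N}]=0$, $\nabla X=0$ and $\ric(X,\cdot)=0$. Your route does work, with two corrections to the sketch: the classical identity you quote with right-hand side $0$ requires constant Riemannian mean curvature, which you do not have; you need the version $\Delta_\Sg\var+\big(|\sg|^2+\ric(N,N)\big)\var=-n\escpr{X^\top,\nabla_\Sg H}$ for parallel $X$. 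Then constancy of $H_f$ gives $n\,X^\top(H)=X^\top\escpr{\nabla\psi,N}=(\nabla^2\psi)(X^\top,N)+\escpr{\nabla\psi,\nabla_{X^\top}N}$; the second summand cancels against the drift term $\escpr{\nabla_\Sg\psi,\nabla_\Sg\var}$ of $\Delta_{\Sg,f}$ (note $\nabla_\Sg\var=\nabla_{X^\top}N$ by symmetry of the shape operator), and what remains combines with the correction $\ric_f(N,N)-\ric(N,N)=-(\nabla^2\psi)(N,N)$ into $-(\nabla^2\psi)(X,N)=\ric_f(X,N)$, since $\ric(X,N)=0$. So the cancellation you flagged as the main risk does occur, but you left it unverified; what your approach buys is a self-contained tensor proof, while the paper's variational argument is shorter and makes the role of the isometric flow transparent.
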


\begin{proof}
Since a parallel vector field is a Killing vector field \cite[p.~82]{dcriem}, the diffeomorphism $\phi_s$ is an isometry of $(M,g)$ for any $s\in (-\eps,\eps)$. Let $\Sg_s:=\phi_s(\Sg)$ and take the vector field $\overline{N}$, whose restriction to $\Sg_s$ is the unit normal $N_s$ defined by equality $N_s(\phi_s(p)):=(d\phi_s)_p(N(p))$ for any $p\in\Sg$. It follows that $[X,\overline{N}]=0$ along $\Sg$, see \cite[Prop.~5.4 in p.~28]{dcriem}. Denote by $(H_f)_s$ the weighted mean curvature of $\Sg_s$ with respect to $N_s$. Since the Riemannian mean curvature $H_s$ is preserved under isometries, we obtain by equation \eqref{eq:fmc} that
\[
(H_f)_s(\phi_s(p))=\big(nH_s-\escpr{\nabla\psi,N_s}\big)(\phi_s(p))=nH(p)-\escpr{\nabla\psi,N_s}(\phi_s(p)),
\]
for any $p\in\Sg$. Hence, the formula in \eqref{eq:hfprima} and the definition of $\ric_f$ in \eqref{eq:fricci} imply that
\begin{align*}
\mathcal{L}_f(\var)=\frac{d}{ds}\bigg|_{s=0}\big((H_f)_s\circ\phi_s\big)&=-(\nabla^2\psi)(X,N)-\escpr{\nabla\psi,\nabla_{X}\overline{N}}
\\
&=\ric_f(X,N)-\ric(X,N)-\escpr{\nabla\psi,\nabla_{\overline{N}}X}
\\
&=\ric_f(X,N),
\end{align*}
where we have used that $\ric(X,Y)=0$ and $\nabla_{Y}X=0$ for any vector field $Y$ because $X$ is parallel. 

Now we prove (ii). Denote by $\eta$ the inner unit normal on $\ptl M$. Since the hypersurface $\Sg$ meets $\ptl M$ orthogonally along $\ptl\Sg$, then $\escpr{\eta,N}=0$ and $\eta$ coincides with the conormal vector $\nu$ along $\ptl\Sg$. Thus, for any vector $w$ tangent to $\ptl\Sg$, we infer that $\escpr{\nabla_w\eta,N}+\escpr{\eta,\nabla_wN}=0$. As a consequence
\[
0=\text{II}(w,N)+\sg(w,\nu)=\text{II}(N,w)+\sg(\nu,w)=-\escpr{\nabla_N\eta+\nabla_\nu N,w}.
\]
This shows that $\nabla_N\eta+\nabla_\nu N=a\,\nu+b\,N$ along $\ptl\Sg$, where $b=-\text{II}(N,N)$. Since $X$ is parallel and tangent to $\ptl M$, we deduce
\begin{align*}
\frac{\ptl\var}{\ptl\nu}=\escpr{\nabla_\nu X,N}+\escpr{X,\nabla_\nu N}&=\escpr{X,a\,\nu}-\escpr{X,\nabla_N\eta}-\text{II}(N,N)\,\var
\\
\nonumber
&=\escpr{\nabla_NX,\eta}-\text{II}(N,N)\,\var=-\text{II}(N,N)\,\var,
\end{align*}
which completes the proof.
\end{proof}

\begin{remark}
From the equality in Lemma~\ref{lem:eigen} (i), if there is  $c\in\rr$ such that $\text{Ric}_f(X,Y)=c\,\escpr{X,Y}$ for any vector field $Y$, then $\mathcal{L}_f(\var)=c\,\var$, and so $\var$ is an eigenfunction for the weighted Jacobi operator. This holds in particular for gradient Ricci solitons, as was observed by Cheng, Mejia and Zhou~\cite[Cor.~4]{cmz2}, see also \cite[Prop.~1]{alencar-rocha}, after previous work of Colding and Minicozzi for the Gaussian solitons \cite[Lem.~5.5]{colding-minicozzi}, see also \cite[Lem.~5.1]{mcgonagle-ross}.
\end{remark}

Now, we are ready to prove the main result of this section.

\begin{theorem}
\label{th:stable}
Let $(M,g,f)$ be an $(n+1)$-dimensional weighted manifold, possibly with locally convex boundary. Suppose that $X$ is a unit parallel vector field on $M$ which is tangent to $\ptl M$ and has one-parameter flow $\{\phi_s\}_{s\in (-\eps,\eps)}$ for some $\eps>0$. Suppose also that there is $c>0$ such that
\begin{align*}
\emph{Ric}_f(X,Y)&=c\,\escpr{X,Y},\quad\text{for any vector field }\, Y,
\\
\emph{Ric}_f(Y,Y)&\geq c\,|Y|^2,\quad\text{for any vector field }\, Y\perp X.
\end{align*}
Consider an open set of finite weighted perimeter $E\sub M$ such that $\overline{\ptl E\cap\emph{int}(M)}=\Sg\cup\Sg_0$, where $\Sg$ is a smooth hypersurface with boundary $\ptl\Sg=\Sg\cap\ptl M$, and $\Sg_0$ is a closed singular set with $A_f(\Sg_0)=0$. If $E$ is weighted stable and $\Sg$ is weighted parabolic, then either $X$ is tangent to $\Sg$, or $\Sg$ is totally geodesic with $\emph{Ric}_f(N,N)=c$ on $\Sg$ and $\emph{II}(N,N)=0$ along $\ptl\Sg$.
\end{theorem}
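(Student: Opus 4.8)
The plan is to feed the test function $u:=\alpha+\var$, with $\var:=\escpr{X,N}$ and $\alpha\in\rr$ the constant fixed by the mean-zero normalization $\int_\Sg u\,da_f=0$, into the extended stability inequality of Proposition~\ref{prop:extended}~(ii). Since $A_f(\Sg)=P_f(E)<\infty$ and $|\var|\leq|X|\,|N|=1$, this $\alpha$ is well defined and $u$ is bounded. Because $E$ is weighted stable it is weighted stationary, so by Lemma~\ref{lem:varprop}~(i) the hypersurface $\Sg$ has constant weighted mean curvature and meets $\ptl M$ orthogonally along $\ptl\Sg$; these are exactly the hypotheses required to invoke Lemma~\ref{lem:eigen}. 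Moreover, splitting $Y=\escpr{X,Y}\,X+Y^\perp$ shows that the two assumptions on $\ric_f$ force $\ric_f\geq c>0$, in particular $\ric_f\geq 0$, so together with the local convexity of $\ptl M$ and the weighted parabolicity of $\Sg$ all standing assumptions of Proposition~\ref{prop:extended} hold.

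First I would record the two structural identities. By Lemma~\ref{lem:eigen}~(i) and the splitting hypothesis $\ric_f(X,N)=c\,\var$, the function $\var$ is an eigenfunction of the weighted Jacobi operator, $\mathcal{L}_f(\var)=c\,\var$; by Lemma~\ref{lem:eigen}~(ii), $\ptl\var/\ptl\nu+\text{II}(N,N)\,\var=0$ along $\ptl\Sg$. Using these, together with $\mathcal{L}_f(\alpha)=(\ric_f(N,N)+|\sg|^2)\,\alpha$, the mean-zero condition, and the extended symmetry \eqref{eq:symmetry} (licensed here because $u$, $\var$ and the constant $\alpha$ lie in the required integrability classes), a direct computation from the definition \eqref{eq:index2} reduces $\ind_f(u,u)$ to the closed form
\begin{equation*}
\ind_f(u,u)=-\alpha^2\left[\int_\Sg\big(\ric_f(N,N)+|\sg|^2-c\big)\,da_f+\int_{\ptl\Sg}\text{II}(N,N)\,dl_f\right]-c\int_\Sg u^2\,da_f.
\end{equation*}

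The sign analysis then finishes the argument. Writing $N=\var\,X+N^\perp$ with $N^\perp\perp X$, the hypotheses on $\ric_f$ give $\ric_f(N,N)=c\,\var^2+\ric_f(N^\perp,N^\perp)\geq c(\var^2+|N^\perp|^2)=c$, so $\ric_f(N,N)-c\geq 0$ on $\Sg$. Combined with $|\sg|^2\geq 0$ and the local convexity $\text{II}(N,N)\geq 0$, the bracketed quantity is non-negative and finite (by the integrability conclusions of Proposition~\ref{prop:extended}), whence $\ind_f(u,u)\leq -c\int_\Sg u^2\,da_f\leq 0$. Since Proposition~\ref{prop:extended}~(ii) gives $\ind_f(u,u)\geq 0$ and $c>0$, both non-positive terms vanish: $\int_\Sg u^2\,da_f=0$, so $\var\equiv-\alpha$, and $\alpha^2$ times the bracket is zero. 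If $\alpha=0$ then $\var=\escpr{X,N}\equiv 0$, i.e. $X$ is tangent to $\Sg$; if $\alpha\neq 0$ the bracket vanishes, and as each integrand is non-negative we get $\ric_f(N,N)=c$ and $|\sg|^2=0$ on $\Sg$ (so $\Sg$ is totally geodesic) and $\text{II}(N,N)=0$ along $\ptl\Sg$. This is the desired dichotomy.

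The main obstacle is analytic rather than conceptual, and is twofold. First, one must check that $u=\alpha+\var$ is admissible in Proposition~\ref{prop:extended}~(ii), i.e. $u\in C^\infty(\Sg)\cap H^1(\Sg,da_f)$ with $\Delta_{\Sg,f}u\in L^1(\Sg,da_f)$ and $\ptl u/\ptl\nu\in L^1(\ptl\Sg,dl_f)$. Here I would use the pointwise bound $|\nabla_\Sg\var|\leq|\sg|$ (since $\nabla_\Sg\var=-\sg(X^T,\cdot\,)$), the eigenvalue equation in the form $\Delta_{\Sg,f}\var=c\,\var-(\ric_f(N,N)+|\sg|^2)\,\var$, and the boundary identity of Lemma~\ref{lem:eigen}~(ii); each of these is controlled once $\ric_f(N,N)+|\sg|^2\in L^1(\Sg,da_f)$ and $\text{II}(N,N)\in L^1(\ptl\Sg,dl_f)$, precisely the integrability granted by Proposition~\ref{prop:extended}. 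Second, the reduction of $\ind_f(u,u)$ to the displayed closed form is the ``long computation'': it rests on systematically exploiting the eigenfunction and Neumann-type cancellations so that only the curvature, second-fundamental-form and boundary terms survive, while ensuring every integration by parts is justified by the weighted parabolicity of $\Sg$.
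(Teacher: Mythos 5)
Your proposal is correct and takes essentially the same approach as the paper's proof: the same test function $u=\alpha+\escpr{X,N}$ inserted into Proposition~\ref{prop:extended}~(ii), the same integrability checks via Lemma~\ref{lem:eigen} and the symmetry identity \eqref{eq:symmetry}, and your closed-form expression for $\ind_f(u,u)$ is exactly the paper's \eqref{eq:opera}, since $\ric_f(N,N)-c=\ric_f(Y,Y)-c\,|Y|^2$ by the splitting hypothesis and $\int_\Sg u^2\,da_f$ equals the variance term appearing there. The only cosmetic difference is that the paper records the connectedness of $\Sg$ (via Corollary~\ref{cor:connected}) before the computation, a fact your argument never needs.
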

 
\begin{proof}
From Lemma~\ref{lem:varprop} (i) we get that $\Sg$ has constant weighted mean curvature and meets $\ptl M$ orthogonally along $\ptl\Sg$. Note that $A_f(\Sg)<\infty$ since $E$ has finite weighted perimeter and $A_f(\Sg_0)=0$.

Take a unit vector field $Z$ on $M$. Write $Z=\rho X+Y$, where $\rho:=\escpr{X,Z}$ and $Y\perp X$. It is clear that $\rho^2+|Y|^2=1$. Thus, our first hypothesis about $\ric_f$ gives us
\begin{equation}
\label{eq:ricfcomp}
\begin{split}
\ric_f(Z,Z)&=\rho^2\,\ric_f(X,X)+\ric_f(Y,Y)+2\rho\,\ric_f(X,Y)=c\,\rho^2+\ric_f(Y,Y)
\\
&=c+\ric_f(Y,Y)-c\,|Y|^2.
\end{split}
\end{equation}
Our second hypothesis on $\ric_f$ yields $\ric_f\geq c$. From Corollary~\ref{cor:connected} we deduce that $\Sg$ is connected.

Consider the function $\var:=\escpr{X,N}$ on $\Sg$. Since $|\var|\leq 1$ on $\Sg$ and $A_f(\Sg)<\infty$, then $\var\in L^1(\Sg,da_f)$. We define the test function $u:=\alpha+\var$, where $\alpha:=-A_f(\Sg)^{-1}\int_\Sg\var\,da_f$. This is a bounded smooth function on $\Sg$ with $\int_\Sg u\,da_f=0$. Now we check that $u$ satisfies the integrability hypotheses in Proposition~\ref{prop:extended} (ii). From the fact that $X$ is parallel, we obtain
\begin{equation}
\label{eq:mika1}
(\nabla_\Sg u)(p)=(\nabla_\Sg\var)(p)=-\sum_{i=1}^nk_i(p)\,\escpr{X(p),e_i}\,e_i,
\end{equation} 
where $e_i$ is a principal direction of $\Sg$ at $p$ with associated principal curvature $k_i(p)$. Hence
\[
|\nabla_\Sg u|^2\leq |\sg|^2\leq \ric_f(N,N)+|\sg|^2,
\] 
which is an integrable function with respect to $da_f$ by Proposition~\ref{prop:extended}. From this we infer that $u\in H^1(\Sg,da_f)$. On the other hand, the weighted Jacobi operator of $\Sg$ satisfies the equality
\[
 \mathcal{L}_f(\var)=\ric_f(X,N)=c\,\var
\] 
by Lemma~\ref{lem:eigen} (i) and the first hypothesis about $\ric_f$. Thus, we have
\[
\Delta_{\Sg,f}\,u=\Delta_{\Sg,f}\,\var=c\,\var-(\ric_f(N,N)+|\sg|^2)\,\var,
\]
and so $\Delta_{\Sg,f}u\in L^1(\Sg,da_f)$. As to the boundary term, the equality in Lemma~\ref{lem:eigen} (ii) gives us
\begin{equation}
\label{eq:bdterm}
\frac{\ptl u}{\ptl\nu}=\frac{\ptl\var}{\ptl\nu}=-\text{II}(N,N)\,\var,
\end{equation}
and we deduce that $\ptl u/\ptl\nu\in L^1(\ptl\Sg,dl_f)$ from the conclusion $\text{II}(N,N)\in L^1(\ptl\Sg,dl_f)$ in the statement of Proposition~\ref{prop:extended}.

At this point, the weighted stability of $E$ implies by Proposition~\ref{prop:extended} (ii) that $\ind_f(u,u)\geq 0$. Next, we shall compute the different terms in $\ind_f(u,u)$. Denote $Y:=N-\var\,X$. By equality \eqref{eq:ricfcomp} we get $\ric_f(N,N)=c+\ric_f(Y,Y)-c\,|Y|^2$ on $\Sg$. We also know that $\mathcal{L}_f(\var)=c\,\var$. As a consequence
\begin{equation}
\label{eq:memo}
\begin{split}
\int_\Sg u\,\mathcal{L}_f (u)\,da_f&=\int_\Sg (\alpha+\var)\,\mathcal{L}_f(\alpha+\var)\,da_f
\\
&=\int_\Sg (\alpha+\var)\left\{\alpha\,\big(c+\ric_f(Y,Y)-c\,|Y|^2+|\sg|^2\big)+c\,\var\right\}da_f
\\
&=\int_\Sg (\alpha+\var)\left\{\alpha\,\big(\ric_f(Y,Y)-c\,|Y|^2+|\sg|^2\big)+c\,\var\right\}da_f,
\end{split}
\end{equation}
where in the last equality we have used that $\int_\Sg u\,da_f=0$. By taking into account equations \eqref{eq:symmetry} and \eqref{eq:bdterm}, we obtain
\begin{align*}
\int_\Sg c\,\var\,da_f&=\int_\Sg\mathcal{L}_f(\var)\,da_f
=\int_\Sg \var\,\mathcal{L}_f(1)\,da_f-\int_{\ptl\Sg}\frac{\ptl \var}{\ptl\nu}\,dl_f
\\
&=\int_\Sg \var\,\big(c+\ric_f(Y,Y)-c\,|Y|^2+|\sg|^2\big)\,da_f+\int_{\ptl\Sg}\text{II}(N,N)\,\var\,dl_f,
\end{align*}
so that
\[ 
\int_\Sg \var\,\big(\ric_f(Y,Y)-c\,|Y|^2+|\sg|^2\big)\,da_f=-\int_{\ptl\Sg}\text{II}(N,N)\,\var\,dl_f. 
\]
Plugging this into \eqref{eq:memo}, and having in mind that $\alpha=-A_f(\Sg)^{-1}\int_\Sg \var\,da_f$, it follows that
\begin{equation}
\label{eq:premio1}
\begin{split}
\int_\Sg u\,\mathcal{L}_f(u)\,da_f&=\alpha^2\int_\Sg\big(\ric_f(Y,Y)-c\,|Y|^2+|\sg|^2\big)\,da_f
\\
&+c\,\left[\int_\Sg \var^2\,da_f-\frac{1}{A_f(\Sg)}\left(\int_\Sg \var\,da_f\right)^2\right]-\alpha\int_{\ptl\Sg}\text{II}(N,N)\,\var\,dl_f.
\end{split}
\end{equation}
On the other hand, equation \eqref{eq:bdterm} yields
\begin{align}
\label{eq:premio2}
\int_{\ptl\Sg}u\left\{\frac{\ptl u}{\ptl\nu}+\text{II}(N,N)\,u\right\}dl_f=\alpha\int_{\ptl\Sg}\text{II}(N,N)\,u\,dl_f.
\end{align}
Therefore, equalities \eqref{eq:premio1} and \eqref{eq:premio2} together with the stability inequality, lead to
\begin{equation}
\label{eq:opera}
\begin{split}
0\leq\ind_f(u,u)=&-\alpha^2\int_\Sg\big(\ric_f(Y,Y)-c\,|Y|^2+|\sg|^2\big)\,da_f
\\
&-c\left[\int_\Sg \var^2\,da_f-\frac{1}{A_f(\Sg)}\left(\int_\Sg \var\,da_f\right)^2\right]
\\
&-\alpha^2\int_{\ptl\Sg}\text{II}(N,N)\,dl_f\leq 0,
\end{split}
\end{equation}
where we have used that $\ric_f(Y,Y)\geq c\,|Y|^2$, the Cauchy-Schwarz inequality in $L^2(\Sg,da_f)$ and the local convexity of $\ptl M$. Hence, the three terms at the right hand side of \eqref{eq:opera} vanish. In particular $\var(p)=-\alpha$ for any $p\in\Sg$. If $\alpha=0$, then $X$ is tangent to $\Sg$. Otherwise we get $|\sg|^2=0$ (i.e. $\Sg$ is totally geodesic), $\text{II}(N,N)=0$ along $\ptl\Sg$ and $\ric_f(Y,Y)=c\,|Y|^2$, which by \eqref{eq:ricfcomp} implies that $\ric_f(N,N)=c$ on $\Sg$. 
This completes the proof.
\end{proof}

\begin{remarks}
\label{re:muchas}
(i). By assuming that $c=0$ (instead of $c>0$) and that $\Sg$ is disconnected or satisfies $\int_\Sg\escpr{X,N}\,da_f\neq 0$, we conclude that $\Sg$ is totally geodesic with $\ric_f(N,N)=\text{II}(N,N)=0$. When $\Sg$ is disconnected it suffices to apply Corollary~\ref{cor:connected}. Otherwise the proof goes as in the case $c>0$ with the only difference that the second summand in equation~\eqref{eq:opera} vanishes. 

(ii). For a given stable set $E$ with $\ptl\Sg\neq\emptyset$, the theorem is true by assuming the local convexity condition and the tangency property of $X$ only in a neighborhood of $\ptl\Sg$ in $\ptl M$. 

(iii). From Theorem~\ref{th:regularity} the singular set $\Sg_0$ in the interior boundary of a weighted minimizer consists of points where $|\sg|^2$ blows up. For a weighted stable set $E$ with a singular set $\Sg_0$ of this type the conclusion that $\Sg$ is totally geodesic implies $\Sg_0=\emptyset$ and $\Sg=\overline{\ptl E\cap\text{int}(M)}$. On the other hand, from the fact that $X$ is tangent to $\Sg$ it follows that $\Sg$ carries a unit vector field which is parallel with respect to the induced metric. As we see ``below'' this entails some restrictions on the topology and geometry of $\Sg$.

(iv). By reasoning as in \cite[Lem.~4.5]{rosales-gauss} we can give a geometric interpretation of the test function $u:=\alpha+\escpr{X,N}$. For a bounded set $E$ with smooth interior boundary, this function is associated with a volume-preserving variation obtained by combining the one-parameter flow of $X$ with the one-parameter flow of a vector field $\overline{N}$ on $M$, which is tangent to $\ptl M$, and satisfies $\overline{N}=N$ on $\Sg$.

(v). Recall that a hypersurface $\Sg$ immersed in $M$ is $f$-stable if it is a second order minimum of the interior weighted area under variations preserving the weighted volume, see for instance \cite[Sect.~3]{castro-rosales}. It is straightforward to check that the rigidity properties in Theorem~\ref{th:stable} are also valid for any orientable, weighted parabolic and $f$-stable hypersurface $\Sg$ with $A_f(\Sg)<\infty$. In the particular case $\ptl\Sg=\emptyset$, this generalizes previous results of McGonagle and Ross~\cite[Cor.~4.8]{mcgonagle-ross} in Gauss space, and of Alencar and Rocha~\cite[Thm.~2]{alencar-rocha} for arbitrary shrinkers. 
\end{remarks}

In order to show the scope of Theorem~\ref{th:stable} we analyze the different hypotheses in the statement and list some interesting situations where the result applies.

\emph{1. Manifolds with parallel vector fields}. If $X$ is a unit parallel vector field on a Riemannian manifold $(M,g)$, then the sectional curvature vanishes for any tangent plane containing $X$. Hence, we have the curvature condition $\ric(X,Y)=0$ for any vector field $Y$ and so, the hypothesis $\ric_f(X,Y)=c\,\escpr{X,Y}$ is equivalent to that $(\nabla^2\psi)(X,Y)=-c\,\escpr{X,Y}$ for any $Y$. Other geometric and topological restrictions are found in two papers of Welsh, see~\cite[Thms.~1 and 3]{welsh1}, \cite[Prop.~2.1]{welsh2} and the references therein. As a consequence, a complete Riemannian manifold admits a unit parallel vector field if and only if it is isometric to a quotient $(\Om\times\rr)/G$, where $\Om\times\rr$ is a Riemannian cylinder and $G$ is a subgroup of $\text{Iso}(\Om)\times\rr$ such that the projection $G\to\rr$ is injective and the orbits of $G$ in $\Om\times\rr$ are discrete sets. 

\emph{2. Riemannian cylinders}. The previous observation leads to consider the simplest example of a Riemannian cylinder $M:=\Om\times\rr$ with its vertical parallel vector field $\xi$. We know that $\ptl M$ is locally convex if and only if $\ptl\Om$ is locally convex. For a product weight $f(x,t):=e^{h(x)}\,e^{v(t)}$ we infer from equation \eqref{eq:riccidesc} that 
\begin{align*}
\ric_f(\xi,Y)&=-(v''\circ\pi)\,\escpr{\xi,Y}, \quad\text{for any vector field } Y,
\\
\ric_f(Y,Y)&=\ric_h(Y_*,Y_*), \quad\text{for any vector field } Y\perp\xi,
\end{align*}
where $\pi(x,t):=t$ and $Y_*(x,t)\in T_x\Om$ is the horizontal projection of $Y(x,t)$. So, the hypotheses involving $\ric_f$ in Theorem~\ref{th:stable} are equivalent to that the horizontal weight $e^h$ satisfies $\ric_h\geq c$ on $\Om$, and $e^v$ is an affine perturbation of the $c\,$-Gaussian soliton in $\rr$, i.e., $v(t)=-c\,t^2/2-bt-a$ for some $a,b\in\rr$. In Section~\ref{sec:cylinders} we will discuss in more detail the classification of stable sets and the uniqueness of weighted minimizers in this setting.

\emph{3. Gradient Ricci solitons}. Our assumptions on $\ric_f$ are satisfied when $(M,g,f)$ is a shrinker or a steady soliton (see Remarks~\ref{re:muchas} (i)). By a result of Petersen and Wylie~\cite[Cor.~2.2]{petersen-wylie}, the existence in a gradient Ricci soliton $(M,g,f)$ of a unit parallel vector field $X$ entails that $(M,g)$ is isometric to a Riemannian cylinder $\Om\times\rr$, or $\escpr{\nabla\psi,X}=0$ on $M$. In the second case, we have $(\nabla^2\psi)(X,X)=0$, so that $c=\ric_f(X,X)=0$. Thus, the shrinkers for which Theorem~\ref{th:stable} holds are Riemannian cylinders $\Om\times\rr$ with a product weight $f(x,t):=e^{h(x)}\,e^{v(t)}$ such that $\Om$ is a $c\,$-gradient soliton with respect to $e^h$, and $e^v$ is an affine perturbation of the $c\,$-Gaussian soliton. The case of steady solitons is less rigid, as there are examples not isometric to a Riemannian cylinder (like unweighted compact Ricci-flat manifolds having a unit parallel vector field). 

\emph{4. Horizontal perturbations}. The hypothesis $\ric_f(Y,Y)\geq c\,|Y|^2$ for any vector field $Y\perp X$ allows to apply Theorem~\ref{th:stable} for more general weighted manifolds than the gradient Ricci solitons described above. A related situation appears when we consider the Riemannian cylinder $M:=\Om\times\rr$ with a product weight $f(x,t):=e^{\mu(x)+\omega(x)}\,e^{-ct^2/2}$, where $e^\mu$ is a weight with $\ric_\mu=c$ on $\Om$ and $\omega$ is any smooth concave function on $\Om$.

\emph{5. Integral condition.} When $c=0$ and $\Sg$ is connected we need that $\int_\Sg\escpr{X,N}\,da_f\neq 0$ to deduce the rigidity properties of stable sets. Note that, for a bounded set $E$ with smooth interior boundary $\Sg$, the divergence theorem and equation~\eqref{eq:divf} lead to
\begin{equation}
\label{eq:divthweight}
\int_\Sg\escpr{X,N}\,da_f=-\int_{E}\divv_f X\,dv_f=-\int_E\escpr{\nabla\psi,X}\,dv_f,
\end{equation}
because $X$ is parallel and tangent to $\ptl M$. This guarantees that $\int_\Sg\escpr{X,N}\,da_f\neq 0$ if the term $\escpr{\nabla\psi,X}$ never vanishes on $M$. On the contrary, in the unweighted case we have $\int_\Sg\escpr{X,N}\,da=0$. In Section~\ref{subsec:zerocase} we will analyze this condition and its consequences for weighted cylinders.

\section{Stability and isoperimetry in weighted cylinders}
\label{sec:cylinders}

In this section we consider Riemannian cylinders with locally convex boundary and a weight $f$ such that $\ric_f\geq c>0$. In Section~\ref{subsec:unique} we provide conditions that allow to classify stable sets and to deduce that the horizontal half-spaces are the only minimizers. In Section~\ref{subsec:examples} we show some situations where our results apply. Finally, in Section~\ref{subsec:zerocase} we briefly discuss the case $c=0$.

\subsection{Uniqueness results}
\label{subsec:unique}
\noindent

We begin with a statement that improves Theorem~\ref{th:stable} by showing that the rigidity properties for a weighted stable set still hold when $\xi$ is tangent to the regular part of its interior boundary. 

\begin{theorem}
\label{th:stacyl}
Let $(M,g,f)$ be a weighted cylinder with base space $\Om$, possibly with locally convex boundary, and weight of the form
\[
f(x,t):=e^{h(x)}\,e^{v(t)},
\]
where the function $h\in C^\infty(\Om)$ satisfies $\emph{Ric}_h\geq c>0$, and $e^v$ is an affine perturbation of the $c$-Gaussian soliton on $\rr$. Consider an open set of finite weighted perimeter $E\sub M$ such that $\overline{\ptl E\cap\emph{int}(M)}=\Sg\cup\Sg_0$, where $\Sg$ is a smooth hypersurface with boundary $\ptl\Sg=\Sg\cap\ptl M$, and $\Sg_0$ is a closed singular set with $A_f(\Sg_0)=0$. If $E$ is weighted stable and $\Sg$ is weighted parabolic, then $\Sg$ is a connected totally geodesic hypersurface with $\emph{Ric}_f(N,N)=c$ on $\Sg$ and $\emph{II}(N,N)=0$ along $\ptl\Sg$.
\end{theorem}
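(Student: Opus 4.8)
The plan is to apply Theorem~\ref{th:stable} with $X=\xi$, and then to treat separately the one situation it leaves open, namely when $\xi$ is tangent to $\Sg$. First I would verify the hypotheses of Theorem~\ref{th:stable}. Since $e^v$ is an affine perturbation of the $c$-Gaussian soliton we have $v(t)=-ct^2/2-bt-a$, so $v''\equiv -c$, and the decomposition \eqref{eq:riccidesc} gives $\ric_f(\xi,Y)=c\,\escpr{\xi,Y}$ for every vector field $Y$ and $\ric_f(Y,Y)=\ric_h(Y_*,Y_*)\geq c\,|Y|^2$ for $Y\perp\xi$. In particular $\ric_f\geq c>0$, so Corollary~\ref{cor:connected} already forces $\Sg$ to be connected. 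Theorem~\ref{th:stable} then yields that either $\Sg$ is totally geodesic with $\ric_f(N,N)=c$ on $\Sg$ and $\text{II}(N,N)=0$ along $\ptl\Sg$ (in which case we are done), or $\xi$ is tangent to $\Sg$. Thus the whole content of the statement is to rule out the second alternative, i.e. to prove the same rigidity in the \emph{vertical} case.

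So suppose $\xi$ is tangent to $\Sg$. Then $\var:=\escpr{\xi,N}\equiv 0$, so $N$ is horizontal and the test function used in Theorem~\ref{th:stable} degenerates; a different function is needed. Since $\xi$ is a complete vector field tangent to $\Sg$, the regular part $\Sg$ is a union of complete vertical lines, and hence it carries the product structure $\Sg=\Gamma\times\rr$ for some hypersurface $\Gamma\sub\Om$, with induced metric and weight again of product type. My candidate is the height function $u_0(t):=t+b/c$, the first eigenfunction of the one-dimensional Ornstein--Uhlenbeck operator associated with $e^v$, which realizes the sharp Gaussian spectral gap $c$ (one checks $\Delta_{v}u_0=-c\,u_0$). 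Because $\nabla_\Sg t=\xi$ we have $|\nabla_\Sg u_0|\equiv 1$, and Fubini together with the sharp one-dimensional Gaussian Poincar\'e inequality (the factor $e^{v}$ is a Gaussian of variance $1/c$) gives both $\int_\Sg u_0\,da_f=0$ and the equality $\int_\Sg|\nabla_\Sg u_0|^2\,da_f=c\int_\Sg u_0^2\,da_f$. Finiteness of these integrals follows from $A_f(\Sg)<\infty$ and the Gaussian decay in $t$.

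Next I would insert $u_0$ into the extended stability inequality of Proposition~\ref{prop:extended}~(i). As $u_0$ is unbounded, I would first apply the proposition to the bounded mean-zero truncations $w_k:=T_k(u_0)-m_k$, where $T_k$ is truncation at height $k$ and $m_k$ the constant making $\int_\Sg w_k\,da_f=0$, obtaining $\indo_f(w_k,w_k)\geq 0$, that is
\[
\int_\Sg\big(\ric_f(N,N)+|\sg|^2\big)\,w_k^2\,da_f+\int_{\ptl\Sg}\text{II}(N,N)\,w_k^2\,dl_f\leq\int_\Sg|\nabla_\Sg w_k|^2\,da_f.
\]
Letting $k\to\infty$, the right-hand side converges to $\int_\Sg|\nabla_\Sg u_0|^2\,da_f=c\int_\Sg u_0^2\,da_f$ by monotone convergence, while Fatou's lemma applied to the non-negative integrands on the left (here $\ric_f(N,N)=\ric_h(N,N)\geq c>0$, $|\sg|^2\geq 0$ and $\text{II}(N,N)\geq 0$ by local convexity) gives
\[
\int_\Sg\big(\ric_f(N,N)-c+|\sg|^2\big)\,u_0^2\,da_f+\int_{\ptl\Sg}\text{II}(N,N)\,u_0^2\,dl_f\leq 0.
\]
Since each summand is non-negative, all of them vanish. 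As $u_0$ is non-zero off a single slice, continuity then forces $\ric_f(N,N)=c$ and $|\sg|^2=0$ on $\Sg$, and $\text{II}(N,N)=0$ along $\ptl\Sg$, which is exactly the claim (connectivity being already known).

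The main obstacle is precisely this vertical case: one must single out the correct sharp test function -- the height $u_0$, dictated by the fact that $e^v$ saturates the Gaussian Poincar\'e inequality with constant $c$ -- and then justify its use despite its unboundedness, through the truncation-plus-Fatou passage to the limit in Proposition~\ref{prop:extended}. Establishing the product structure $\Sg=\Gamma\times\rr$ cleanly (so that Fubini yields the mean-zero and Poincar\'e-equality properties) is the other point requiring care; everything else reduces to Theorem~\ref{th:stable} and Corollary~\ref{cor:connected}.
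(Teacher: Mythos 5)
Your proposal is correct, and its skeleton is the same as the paper's: verify the hypotheses of Theorem~\ref{th:stable} for $X=\xi$, obtain connectivity from Corollary~\ref{cor:connected} (since $\ric_f\geq c>0$), and reduce to the case where $\Sg$ is vertical, which is then settled by a one-dimensional test-function argument exploiting that the spectral gap of $(\rr,e^v)$ is exactly $c$. The only real divergence is in how that vertical case is executed. The paper uses test functions $u(x,t)=\psi(t)$ with $\psi\in C^\infty_0(\rr)$ of mean zero with respect to $dl_v$; Proposition~\ref{prop:extended}~(i) plus Fubini show that the Rayleigh quotient of every such $\psi$ is at least $A_h(\Sg_*)^{-1}\big(\int_{\Sg_*}(\ric_f(N,N)+|\sg|^2)\,da_h+\int_{\ptl\Sg_*}\text{II}(N,N)\,dl_h\big)\geq c$, and since the infimum of those quotients is the Poincar\'e constant of $(\rr,e^v)$, which equals $c$ (quoting Ledoux), both inequalities collapse to equalities; this stays entirely within bounded, compactly supported test functions, with no limiting argument beyond the definition of the Poincar\'e constant. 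You instead insert the explicit optimizer $u_0(t)=t+b/c$, which is unbounded and forces the truncation-plus-Fatou passage; your handling of that is sound (the truncations are bounded, lie in $H^1(\Sg,da_f)$, have mean zero after subtracting $m_k\to0$, and the sign conditions $\ric_f(N,N)\geq c$, $|\sg|^2\geq 0$, $\text{II}(N,N)\geq 0$ justify Fatou), and it buys you independence from the quoted value of the Poincar\'e constant -- you only need the elementary Gaussian identity $\int_\rr |u_0'|^2\,dl_v=c\int_\rr u_0^2\,dl_v$. The one step where you are too quick -- and which you yourself flag -- is the product structure: completeness of $\xi$ on $M$ does not imply that an integral curve of $\xi$ starting on $\Sg$ stays on $\Sg$; a priori it could terminate at a point of $\Sg_0$, which is precisely what the paper notes must be excluded (``a point in $\Sg_0$ cannot appear along these segments'') before writing $\Sg=\Sg_*\times\rr$. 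Since your Fubini computation and the mean-zero and Poincar\'e identities only use the product structure up to a set of $da_f$-measure zero, this is a repairable point of rigor rather than a fatal gap, but it deserves an argument rather than an appeal to completeness of the vector field.
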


\begin{proof}
The connectivity of $\Sg$ comes from Corollary~\ref{cor:connected}. By Theorem~\ref{th:stable} it is enough to see that, if $\Sg$ is a vertical hypersurface, then $\Sg$ is totally geodesic with $\ric_f(N,N)=c$ on $\Sg$ and $\text{II}(N,N)=0$ along $\ptl\Sg$. The fact that $\Sg$ is vertical implies that it is foliated by integral curves of $\xi$, i.e., by vertical segments. Since a point in $\Sg_0$ cannot appear along these segments we get $\Sg=\Sg_*\times\rr$ for some smooth hypersurface $\Sg_*$ in $\Om$ with $\ptl\Sg_*=\Sg_*\cap\ptl\Om$. The weighted area $A_h(\Sg_*)$ of $\Sg_*$ with respect to $e^h$ is finite because it coincides, up to a multiplicative constant, with $A_f(\Sg)=P_f(E)<\infty$. 

Let $u:\Sg\to\rr$ be a function defined by $u(x,t):=\psi(t)$, where $\psi\in C^\infty_0(\rr)$ with $\psi\neq 0$ and $\int_\rr\psi\,dl_v=0$. Here $dl_v$ is the weighted length measure in $\rr$ for the weight $e^v$. By having in mind the stability inequality in Proposition~\ref{prop:extended} (i), Fubini's theorem, and that the functions $q_1:=\ric_f(N,N)+|\sg|^2$ and $q_2:=\text{II}(N,N)$ only depend on the horizontal component of the points in $\Sg$ and $\ptl\Sg$, we obtain
\[
0\leq\indo_f(u,u)=A_h(\Sg_*)\,\int_\rr(\psi')^2\,dl_v-\left(\int_{\Sg_*} q_1\,da_h+\int_{\ptl\Sg_*}q_2\,dl_h\right)\int_\rr\psi^2\,dl_v,
\]
where $da_h$ and $dl_h$ denote the corresponding horizontal weighted measures in $\Om$. It follows that
\[
\frac{\int_\rr(\psi')^2\,dl_v}{\int_\rr\psi^2\,dl_v}\geq\frac{\int_{\Sg_*}q_1\,da_h+\int_{\ptl\Sg_*}q_2\,dl_h}{A_h(\Sg_*)}\geq c,
\]
where we have used that $\ric_f(N,N)\geq c$ on $\Sg$, $|\sg|^2\geq 0$ on $\Sg$ and $\text{II}(N,N)\geq 0$ along $\ptl\Sg$. The proof finishes by taking into account that the Poincar\'e constant defined in \eqref{eq:poincare} of $\rr$ with respect to the weight $e^v$ equals $c$, see for instance Ledoux~\cite[Sect.~2.1]{markov}.
\end{proof}

Under the conditions of Theorem~\ref{th:stacyl} any horizontal half-space is a weighted stable set, see Example~\ref{ex:starig} or Example~\ref{ex:horstable}. In general, other weighted stable sets can appear. For instance, if $M$ is a Euclidean half-space or slab in a $c\,$-Gaussian soliton with $c>0$, then the intersection with $M$ of any Euclidean half-space with boundary perpendicular o parallel to $\ptl M$ is weighted stable. The latter case shows an example where $\ptl\Sg=\emptyset$ even though $\ptl M\neq\emptyset$.

In the next result we provide geometric and analytic conditions on the base manifold $\Om$ and the weight $e^h$ ensuring that the horizontal half-spaces are the only weighted stable sets.

\begin{corollary}
\label{cor:staunique}
Let $(M,g,f)$ be a weighted cylinder with base space $\Om$, possibly with locally convex boundary, and weight of the form
\[
f(x,t):=e^{h(x)}\,e^{v(t)},
\]
where the function $h\in C^\infty(\Om)$ satisfies $\emph{Ric}_h\geq c>0$, and $e^v$ is an affine perturbation of the $c$-Gaussian soliton on $\rr$. Consider an open set of finite weighted perimeter $E\sub M$ such that $\overline{\ptl E\cap\emph{int}(M)}=\Sg\cup\Sg_0$, where $\Sg$ is a smooth hypersurface with boundary $\ptl\Sg=\Sg\cap\ptl M$, and $\Sg_0$ is a closed singular set with $A_f(\Sg_0)=0$. Suppose that $E$ is weighted stable, $\Sg$ is weighted parabolic, and some of the following conditions hold:
\begin{itemize}
\item[(i)] $\emph{Ric}_h>c$, or
\item[(ii)] $\ptl\Om$ is locally strictly convex and $\ptl\Sg\neq\emptyset$.
\end{itemize}
Then, $E$ is a horizontal half-space.
\end{corollary}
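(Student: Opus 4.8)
The plan is to build directly on Theorem~\ref{th:stacyl}, which already guarantees that $\Sg$ is a connected totally geodesic hypersurface with $\ric_f(N,N)=c$ on $\Sg$ and $\text{II}(N,N)=0$ along $\ptl\Sg$. It then suffices to show, under either hypothesis, that $\xi$ is normal to $\Sg$, i.e.\ that $\Sg$ is a horizontal slice $\Om_{t_0}$; from there $E$ is forced to be one of the two horizontal half-spaces that $\Om_{t_0}$ determines. First I would write $\var:=\escpr{\xi,N}$ and decompose $N=\var\,\xi+N_*$ with $N_*$ horizontal, so that $|N_*|^2=1-\var^2$. Since $e^v$ is an affine perturbation of the $c$-Gaussian soliton we have $v''\equiv-c$, so \eqref{eq:riccidesc} gives
\[
\ric_f(N,N)=\ric_h(N_*,N_*)+c\,\var^2.
\]
Comparing with the equality $\ric_f(N,N)=c$ from Theorem~\ref{th:stacyl} yields the pointwise identity $\ric_h(N_*,N_*)=c\,|N_*|^2$ on all of $\Sg$, which will drive both cases.

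Under condition (i), $\ric_h>c$, the argument is immediate: if $N_*(p)\neq0$ at some $p\in\Sg$ then $\ric_h(N_*,N_*)>c\,|N_*|^2$ there, contradicting the identity above. Hence $N_*\equiv0$, so $N=\pm\xi$ and $\Sg$ is horizontal.

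Under condition (ii) the interior identity only yields $\ric_h(N_*,N_*)=c\,|N_*|^2$, which is compatible with $N_*\neq0$, so I would instead exploit the boundary. Because $\Sg$ is totally geodesic, the computation \eqref{eq:mika1} applied to the parallel field $\xi$ gives $\nabla_\Sg\var=0$, so $\var$ is constant on the connected hypersurface $\Sg$; it remains only to identify this constant from a boundary point. Along $\ptl\Sg$ the normal $N$ is tangent to $\ptl M=\ptl\Om\times\rr$, so $N_*$ is tangent to $\ptl\Om$, and since $\xi$ is a parallel vertical direction of the product one has $\text{II}(\xi,\cdot)=0$, whence
\[
\text{II}(N,N)=\text{II}(N_*,N_*)=\text{II}_{\ptl\Om}(N_*,N_*).
\]
As $\ptl\Om$ is locally strictly convex, the vanishing $\text{II}(N,N)=0$ along the non-empty set $\ptl\Sg$ forces $N_*=0$ there, i.e.\ $\var=\pm1$ on $\ptl\Sg$; by constancy $\var\equiv\pm1$ on $\Sg$, so again $N=\pm\xi$ and $\Sg$ is horizontal.

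To close, once $N=\pm\xi$ on $\Sg$ the tangential gradient of the height function vanishes, $\nabla_\Sg t=\xi-\var\,N=0$, so $t\equiv t_0$ and $\Sg\subseteq\Om_{t_0}$. Total geodesy rules out points where $|\sg|^2$ blows up, so $\Sg_0=\emptyset$ by Remarks~\ref{re:muchas}(iii) and $\Sg=\overline{\ptl E\cap\text{int}(M)}$ is closed; being a relatively open and closed subset of the connected slice $\Om_{t_0}$ it must coincide with it, so $\Sg=\Om_{t_0}$ and $E$ is the horizontal half-space it bounds. I expect the two delicate points to be the identification $\text{II}(N,N)=\text{II}_{\ptl\Om}(N_*,N_*)$ from the product structure of $\ptl M$ and the final topological step that the relatively open totally geodesic piece $\Sg$ exhausts the entire slice; the curvature computation via \eqref{eq:riccidesc} and the constancy of $\var$ via \eqref{eq:mika1} are routine given Theorem~\ref{th:stacyl}.
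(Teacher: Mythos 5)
Your proof is correct and is essentially the paper's own argument: both start from Theorem~\ref{th:stacyl}, write $N=\var\,\xi+N_*$ (the paper's $N=Y+\var\,\xi$), derive $\ric_h(N_*,N_*)=c\,|N_*|^2$ from $\ric_f(N,N)=c$ together with \eqref{eq:riccidesc} to settle case (i), and in case (ii) use $\text{II}(N,N)=\text{II}(N_*,N_*)$, the strict convexity of $\ptl\Om$ and the constancy of $\var$ (total geodesy plus $\xi$ parallel) to force $N=\pm\xi$, after which $\Sg$ is identified with a horizontal slice. The only difference is in the final bookkeeping, where you invoke Remarks~\ref{re:muchas}(iii) to conclude $\Sg_0=\emptyset$ while the paper deduces $\Sg_0\cap\overline{\Sg}=\emptyset$ and $\Sg=\Om_t$ directly from $\overline{\Sg}\subseteq\Om_t$; both versions rest on the same implicit understanding that points of $\Sg_0$ are genuine singularities where $|\sg|^2$ blows up, which is automatic for the minimizers of Corollary~\ref{cor:isocyl} via Theorem~\ref{th:regularity}.
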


\begin{proof}
We know by Theorem~\ref{th:stacyl} that $\Sg$ is a connected totally geodesic hypersurface satisfying $\ric_f(N,N)=c$ on $\Sg$ and $\text{II}(N,N)=0$ along $\ptl\Sg$. We write $N=Y+\var\,\xi$, where $\var:=\escpr{\xi,N}$. Note that $\var$ is constant on $\Sg$ because $\Sg$ is totally geodesic and $\xi$ is a parallel vector field.

The hypothesis $\ric_h>c$ means that $(\ric_h)_x(w_*,w_*)>c\,|w|^2$ for any $x\in\Om$ and $w_*\in T_x\Om$ with $w_*\neq 0$. By taking $Z=N$ in equation \eqref{eq:ricfcomp} we get $\ric_f(Y,Y)=c\,|Y|^2$. From equation \eqref{eq:riccidesc} we deduce $\ric_h(Y_*,Y_*)=c\,|Y_*|^2$. Hence $Y_*=Y=0$, so that $N=\xi$ or $N=-\xi$ on $\Sg$. Then, the differential of the projection $\pi:\Sg\to\rr$ given by $\pi(x,t):=t$ vanishes everywhere. Thus, there is a horizontal slice $\Om_t:=\Om\times\{t\}$ such that $\Sg\subseteq\Om_t$. Since $\Om_t$ is a closed hypersurface in $M$ it follows that $\overline{\Sg}\subseteq\Om_t$. This entails $\Sg_0\cap\overline{\Sg}=\emptyset$ and $\Sg=\Om_t$, so that $E$ is a horizontal half-space. 

Suppose now that $\ptl\Om$ is locally strictly convex and $\ptl\Sg\neq\emptyset$. From the orthogonality condition between $\Sg$ and $\ptl M$ we have that $Y$ is tangent to $\ptl M$ along $\Sg$. Then, the equality $0=\text{II}(N,N)=\text{II}(Y,Y)$ leads to $Y=0$, and so $N=\pm\xi$ along $\ptl\Sg$. Since $\var$ is constant on $\Sg$ we conclude that $N=\xi$ or $N=-\xi$ on $\Sg$. From here we finish as in the previous case.
\end{proof}

As we noticed in the previous proof, the fact that $\Sg$ is totally geodesic entails that $\escpr{\xi,N}=\theta$ on $\Sg$ for some $\theta\in[-1,1]$. The case $\theta^2=1$ implies that $\Sg$ is a horizontal slice whereas $\theta=0$ means that $\Sg$ is vertical. Half-spaces orthogonal to a Gaussian slab provide examples where $0<\theta^2<1$. The next characterization result shows a situation where the case $0<\theta^2<1$ is not possible. 

\begin{corollary}
\label{cor:stasol}
Let $\Om$ be a Riemannian manifold, possibly with locally convex boundary, and satisfying $\emph{\ric}_\Om\geq c>0$. Consider the Riemannian cylinder $M:=\Om\times\rr$ with a vertical weight $f(x,t):=e^{v(t)}$ that is an affine perturbation of the $c$-Gaussian soliton on $\rr$. Take an open set of finite weighted perimeter $E\sub M$ such that $\overline{\ptl E\cap\emph{int}(M)}=\Sg\cup\Sg_0$, where $\Sg$ is a smooth hypersurface with boundary $\ptl\Sg=\Sg\cap\ptl M$, and $\Sg_0$ is a closed singular set with $A_f(\Sg_0)=0$. If $E$ is weighted stable and $\Sg$ is weighted parabolic then, either $E$ is a horizontal half-space, or $\Sg$ is a vertical, connected, totally geodesic hypersurface in $M$ such that $\emph{Ric}_f(N,N)=c$ on $\Sg$ and $\emph{II}(N,N)=0$ along $\ptl\Sg$.
\end{corollary}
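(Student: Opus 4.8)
The plan is to invoke Theorem~\ref{th:stacyl} and then rule out the ``tilted'' configuration. Since the weight is purely vertical we have $h\equiv 0$, hence $\ric_h=\ric_\Om\geq c>0$, and $e^v$ is an affine perturbation of the $c$-Gaussian soliton; thus the hypotheses of Theorem~\ref{th:stacyl} hold. We conclude that $\Sg$ is connected, totally geodesic, with $\ric_f(N,N)=c$ on $\Sg$ and $\text{II}(N,N)=0$ along $\ptl\Sg$. Because $\Sg$ is totally geodesic and $\xi$ is parallel, the function $\theta:=\escpr{\xi,N}$ is constant on $\Sg$. If $\theta^2=1$ then $N=\pm\xi$ and $\Sg$ lies in a horizontal slice, so $E$ is a horizontal half-space; if $\theta=0$ then $\Sg$ is vertical, which is the second alternative. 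Hence it remains to derive a contradiction in the intermediate range $0<\theta^2<1$.

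Assume $0<\theta^2<1$. First I would exploit the projection $\pi(x,t):=t$, whose ambient gradient $\nabla\pi=\xi$ is parallel, so $\pi$ has vanishing Hessian on $M$ and, since $\Sg$ is totally geodesic, also on $\Sg$. Thus $\pi|_\Sg$ is affine and its tangential gradient $\nabla_\Sg\pi=\xi-\theta N$ is a parallel field on $\Sg$ with constant length $|\nabla_\Sg\pi|=\sqrt{1-\theta^2}$. Set $T:=(1-\theta^2)^{-1/2}\,\nabla_\Sg\pi$, a unit parallel (hence Killing and divergence-free) vector field on $\Sg$. A short computation using $\escpr{\xi,\eta}=0$ and $\escpr{N,\nu}=0$ shows that $T$ is tangent to $\ptl\Sg$, so the flow of $T$ preserves $\Sg$ and its boundary. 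Consequently, for the level sets $\Sg_r:=\{\,p\in\Sg:\pi(p)=r\,\}$, the unweighted areas $\mathcal{H}^{n-1}(\Sg_r)=:A_*$ are independent of $r$. Since the weight $f|_\Sg=e^{v(\pi)}$ is constant on each $\Sg_r$, the coarea formula yields, for any $\Phi$,
\[
\int_\Sg\Phi(\pi)\,da_f=\frac{A_*}{\sqrt{1-\theta^2}}\int_\rr\Phi(r)\,e^{v(r)}\,dr.
\]
Here the crucial role of the vanishing horizontal weight ($h\equiv 0$) is that the horizontal motion of the flow of $T$ does not distort the cross-sectional measure.

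With this reduction in hand I would test the stability inequality of Proposition~\ref{prop:extended}~(i) with $u:=\psi(\pi)$, where $\psi$ is a bounded function on $\rr$ normalized so that $\int_\rr\psi\,e^v\,dr=0$; then $u\in H^1(\Sg,da_f)$ is bounded with $\int_\Sg u\,da_f=0$. Since $\Sg$ is totally geodesic with $\ric_f(N,N)=c$ and $\text{II}(N,N)=0$, the index form in \eqref{eq:index1} reduces to $\indo_f(u,u)=\int_\Sg\{|\nabla_\Sg u|^2-c\,u^2\}\,da_f$, and $|\nabla_\Sg u|^2=(1-\theta^2)\,\psi'(\pi)^2$. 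Applying the displayed reduction gives
\[
\indo_f(u,u)=\frac{A_*}{\sqrt{1-\theta^2}}\int_\rr\big\{(1-\theta^2)\,\psi'(r)^2-c\,\psi(r)^2\big\}\,e^{v(r)}\,dr\geq 0 .
\]
Thus every mean-zero $\psi$ would satisfy $\int_\rr\psi'^2\,e^v\,dr\geq \tfrac{c}{1-\theta^2}\int_\rr\psi^2\,e^v\,dr$, i.e. the Poincar\'e constant of $(\rr,e^v\,dr)$ defined as in \eqref{eq:poincare} would be at least $c/(1-\theta^2)>c$. This contradicts the fact that this constant equals $c$ (see the proof of Theorem~\ref{th:stacyl}); explicitly, a truncation of the optimizer $\psi(r)=r-m$ already violates the inequality. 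Therefore $0<\theta^2<1$ is impossible and the dichotomy follows. The main obstacle is precisely this reduction to the one-dimensional problem: one must verify that $\pi|_\Sg$ fibers $\Sg$ with $r$-independent cross-sectional area and justify the use of the unbounded optimizer by an approximation argument compatible with the weighted parabolicity of $\Sg$, which is exactly where the hypothesis $h\equiv 0$ (rather than merely $\ric_h\geq c$) is indispensable.
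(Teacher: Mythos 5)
Your first step is exactly the paper's: with $h\equiv 0$ one has $\ric_h=\ric_\Om\geq c>0$, so Theorem~\ref{th:stacyl} applies and gives that $\Sg$ is connected, totally geodesic, with $\ric_f(N,N)=c$ on $\Sg$ and $\text{II}(N,N)=0$ along $\ptl\Sg$, and then $\theta:=\escpr{\xi,N}$ is constant on $\Sg$. The gap lies in how you dispose of the range $0<\theta^2<1$. Your reduction to a one-dimensional Poincar\'e inequality rests on the claim that ``the flow of $T$ preserves $\Sg$'', hence that $r\mapsto\mathcal{H}^{n-1}(\Sg_r)$ is constant. But $\Sg$ need not be complete: it is only the regular part of $\overline{\ptl E\cap\text{int}(M)}$, and its closure may contain points of the singular set $\Sg_0$. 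The integral curves of $T$ are geodesics of $\Sg$, hence of $M$ (since $\Sg$ is totally geodesic), but nothing in your argument prevents such a curve from leaving $\Sg$ through a point of $\Sg_0$ in finite time. If that happens, the flow is not defined for all time, the slices $\Sg_r$ need not be isometric to one another, and the coarea formula only yields $\int_\Sg\Phi(\pi)\,da_f=(1-\theta^2)^{-1/2}\int_\rr\Phi(r)\,e^{v(r)}\,m(r)\,dr$ with an unknown density $m(r):=\mathcal{H}^{n-1}(\Sg_r)$. Stability then bounds from below the Poincar\'e constant of the measure $e^{v(r)}m(r)\,dr$, not of $e^{v(r)}dr$, and no contradiction follows (for a suitably log-concave $m$ that constant can exceed $c/(1-\theta^2)$). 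Note that the paper's proof of Theorem~\ref{th:stacyl} confronts precisely this difficulty in the vertical case and disposes of it with an explicit argument that points of $\Sg_0$ cannot occur along vertical segments; you would need, and do not give, the analogous statement for the tilted geodesics tangent to $T$.

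The paper's own proof of the corollary avoids all of this. Since the weight is purely vertical and $e^v$ is an affine perturbation of $\gamma_c$, one has $\nabla\psi=-(c\,\pi+b)\,\xi$ for some $b\in\rr$; since $\Sg$ is totally geodesic, formula \eqref{eq:fmc} gives $H_f=(c\,\pi+b)\,\theta$ on $\Sg$, and $H_f$ is constant by Lemma~\ref{lem:varprop}~(i). Hence if $\theta\neq 0$, then $c\,\pi+b$ is constant on $\Sg$, so $\pi$ is constant (here $c>0$ enters), $\Sg$ lies in a horizontal slice, and $E$ is a horizontal half-space; if $\theta=0$, then $\Sg$ is vertical. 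The tilted case is thus excluded by a pointwise identity valid on $\Sg$ irrespective of $\Sg_0$, with no global analysis, no coarea formula, and no further use of the stability inequality. To salvage your argument you would have to prove that the integral curves of $T$ avoid $\Sg_0$; alternatively, replace the entire second half by the mean-curvature identity above.
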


\begin{proof}
From Theorem~\ref{th:stacyl} we know that $\Sg$ is a connected totally geodesic hypersurface in $M$ with $\ric_f(N,N)=c$ on $\Sg$ and $\text{II}(N,N)=0$ along $\ptl\Sg$. By Lemma~\ref{lem:varprop} (i) the weighted mean curvature $H_f$ is constant on $\Sg$. From equation~\eqref{eq:fmc} and the fact that $\nabla\psi=-(c\,\pi+b)\,\xi$ on $M$ for some $b\in\rr$, we obtain $H_f=(c\,\pi+b)\,\var$ on $\Sg$, where $\var:=\escpr{\xi,N}$ is a constant function with $|\var|\leq 1$. If we have $\var\neq 0$, then $\Sg$ is contained in a horizontal slice $\Om_t$ and we conclude that $E$ is a horizontal half-space. If $\var=0$, then $\xi$ is tangent to $\Sg$. This completes the proof.
\end{proof}

Now, we turn to the analysis of weighted isoperimetric regions. Let $(M,g,f)$ be a weighted cylinder in the hypotheses of Theorem~\ref{th:stacyl}. By a result of Morgan~\cite[Cor.~4]{morgandensity} the fact that $\ric_h\geq c>0$ entails $V_h(\Om)<\infty$ and so, any horizontal slice $\Om_t:=\Om\times\{t\}$ has finite weighted area. Our aim is to seek conditions ensuring that the regions bounded by these slices uniquely minimize the weighted perimeter for fixed weighed volume. 

The isoperimetric property of horizontal half-spaces can be proved in several ways, see for instance Theorem~\ref{th:another}. In general, we can find weighted isoperimetric regions different from horizontal half-spaces. Indeed, if a weighted manifold admits several structures of weighted cylinder in the conditions of Theorem~\ref{th:stacyl}, then any ``horizontal half-space'' associated to one of these structures is a minimizer. This happens for instance in a Gaussian slab, where all half-spaces meeting the boundary orthogonally minimize. 

In the next corollary we deduce from the stability condition rigidity properties of arbitrary minimizers and uniqueness results. A different proof mainly based on the second variation formula is given in Theorem~\ref{th:another}.

\begin{corollary}
\label{cor:isocyl}
Let $(M,g,f)$ be a weighted cylinder with base space $\Om$, possibly with locally convex boundary, and weight of the form
\[
f(x,t):=e^{h(x)}\,e^{v(t)},
\]
where $e^h$ is a smooth bounded function on $\Om$ with $\emph{Ric}_h\geq c>0$, and $e^v$ is an affine perturbation of the $c$-Gaussian soliton on $\rr$. If $E$ is a weighted minimizer with regular and singular parts of its interior boundary denoted by $\Sg$ and $\Sg_0$, respectively, then $\Sg$ is a connected totally geodesic hypersurface, $\Sg_0=\emptyset$, $\emph{Ric}_f(N,N)=c$ on $\Sg$ and $\emph{II}(N,N)=0$ along $\ptl\Sg$. Moreover, if some of the following conditions hold: 
\begin{itemize}
\item[(i)] $\emph{Ric}_h>c$, or
\item[(ii)] $\ptl\Om$ is locally strictly convex and $\ptl\Sg\neq\emptyset$,
\end{itemize}
then $E$ is a horizontal half-space, up to a set of volume zero.
\end{corollary}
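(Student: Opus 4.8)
The plan is to recognize this statement as an essentially immediate consequence of the stability analysis already carried out, since any weighted minimizer is in particular weighted stable. First I would verify that $E$ meets all the hypotheses required to invoke Theorem~\ref{th:stacyl} and Corollary~\ref{cor:staunique}. Being a second order minimum of the weighted perimeter under volume-preserving deformations, $E$ is weighted stable. By Theorem~\ref{th:regularity} its interior boundary splits as $\overline{\ptl E\cap\text{int}(M)}=\Sg\cup\Sg_0$ with $\Sg$ smooth, $\ptl\Sg=\Sg\cap\ptl M$, and $\Sg_0$ a closed singular set of Hausdorff dimension at most $n-7$, so that $A_f(\Sg_0)=0$.

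The only hypothesis that needs genuine checking is the weighted parabolicity of $\Sg$. Since $e^h$ is bounded on $\Om$ by assumption, and since an affine perturbation $v(t)=-c\,t^2/2-bt-a$ of the $c$-Gaussian soliton gives a factor $e^v$ that tends to zero as $|t|\to\infty$ and is therefore bounded on $\rr$, the weight $f=e^h\,e^v$ is bounded on the whole of $M$, hence on $E$. Theorem~\ref{th:parabolicity} then guarantees that $\Sg$ is weighted parabolic.

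With these hypotheses in hand, Theorem~\ref{th:stacyl} yields at once that $\Sg$ is a connected totally geodesic hypersurface satisfying $\ric_f(N,N)=c$ on $\Sg$ and $\text{II}(N,N)=0$ along $\ptl\Sg$. To establish $\Sg_0=\emptyset$ I would invoke the regularity description of Theorem~\ref{th:regularity}: at every point of $\Sg_0$ the squared norm $|\sg|^2$ of the second fundamental form of $\Sg$ blows up, whereas $|\sg|^2\equiv 0$ on the totally geodesic hypersurface $\Sg$. These facts are incompatible, so $\Sg_0$ must be empty and $\Sg=\overline{\ptl E\cap\text{int}(M)}$, exactly as noted in Remarks~\ref{re:muchas} (iii).

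Finally, the additional conclusions under (i) or (ii) follow by applying Corollary~\ref{cor:staunique} to the weighted stable set $E$: either $\ric_h>c$, or the local strict convexity of $\ptl\Om$ together with $\ptl\Sg\neq\emptyset$, forces $N=\pm\xi$ and hence confines $\Sg$ to a horizontal slice, so that $E$ is a horizontal half-space. Since minimizers are determined only modulo sets of weighted volume zero, this identification holds up to such a set. I expect essentially no obstacle here beyond the bookkeeping of verifying hypotheses; the real substance of the argument resides in the already-established Theorem~\ref{th:stacyl} and Corollary~\ref{cor:staunique}.
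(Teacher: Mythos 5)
Your proposal is correct and follows essentially the same route as the paper's own proof: combine Theorems~\ref{th:regularity} and~\ref{th:parabolicity} to place the minimizer under the hypotheses of Theorem~\ref{th:stacyl} and Corollary~\ref{cor:staunique}, and deduce $\Sg_0=\emptyset$ from the blow-up of $|\sg|^2$ at singular points once $\Sg$ is known to be totally geodesic. Your explicit check that $f=e^h e^v$ is bounded (so that Theorem~\ref{th:parabolicity} applies) is a detail the paper leaves implicit, and it is handled correctly.
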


\begin{proof}
Let $E$ be a weighted isoperimetric region. From Theorems~\ref{th:regularity} and \ref{th:parabolicity} we know that $E$ is a stable set of finite weighted perimeter, $\Sg$ is a weighted parabolic hypersurface possibly with boundary $\ptl\Sg=\Sg\cap\ptl M$, and  $A_f(\Sg_0)=0$. Hence, we can employ Theorem~\ref{th:stacyl} and Corollary~\ref{cor:staunique} to infer the claim.  The conclusion $\Sg_0=\emptyset$ comes from Theorem~\ref{th:regularity} since $\Sg$ is totally geodesic.  
\end{proof}

\subsection{Examples}
\label{subsec:examples}
\noindent

We now discuss interesting situations where our results are applied. In some of the cases where different stable candidates appear we will employ comparison arguments to characterize the weighted isoperimetric regions. 

\begin{example}[Cylindrical shrinkers]
\label{ex:main1}
Consider a Riemannian cylinder $P\times\rr$ over a complete manifold $P$ with $\ptl P=\emptyset$. At the end of Section~\ref{subsec:cylinders} we pointed out that a weight $f$ on $P\times\rr$ produces a shrinker with $\ric_f=c$ if and only if $f(x,t)=e^{h(x)}\,e^{v(t)}$, where $\ric_h=c$ and $e^v$ is an affine perturbation of the $c\,$-Gaussian soliton on $\rr$. When $P$ is noncompact it was proved by Cao and Zhou~\cite[Thm.~1.1]{cao-zhou} that there is a point $x_0\in P$ and constants $\alpha,\beta>0$ such that
\[
h(x)\leq-\alpha\,(d(x)-\beta)^2,\quad\text{for any } x\in P,
\]
where $d(x)$ is the distance function in $P$ from $x_0$. This implies that $e^h$ is bounded. Our results provide rigidity of weighted stable sets and minimizers in any cylinder $M:=\Om\times\rr$, where $\Om\subseteq P$ has locally convex boundary. Moreover, if $\ptl\Om$ is locally strictly convex, then any weighted stable set $E$ with $\ptl\Sg\neq\emptyset$ is a horizontal half-space. 
\end{example}

\begin{example}[Cylinders with Gaussian vertical weights]
\label{ex:cyleinstein}
Let $\Om$ be a complete manifold such that $\ptl\Om=\emptyset$ and $\ric_\Om\geq c>0$. Consider the Riemannian cylinder $M:=\Om\times\rr$ with weight $f(x,t):=e^{-ct^2/2}$. A weighted stable set as in Corollary~\ref{cor:stasol} must be a horizontal half-space, or the regular part of its interior boundary is a vertical, connected, totally geodesic hypersurface in $M$. Let us see that only the horizontal half-spaces minimize. 

Let $E$ be a weighted isoperimetric region. From Corollary~\ref{cor:isocyl} and the classification of weighted stable sets above, either $E$ is a horizontal half-space (up to a set of volume zero), or the interior boundary $\Sg$ equals $\Sg_*\times\rr$ for some connected and totally geodesic hypersurface $\Sg_*\sub\Om$. Since $\Om$ is a compact manifold (because $\ric_\Om\geq c>0$) and $\Sg_*$ is a closed subset in $\Om$ then $\Sg_*$ is compact. To prove our claim it suffices to check that
\[
A_f(\Sg_*\times\rr)>A_f(\Om_t),
\]
where $\Om_t:=\Om\times\{t\}$ is any horizontal slice and $\Sg_*$ is any compact and connected minimal hypersurface in $\Om$. This inequality is equivalent to
\[
A(\Sg_*)>\sqrt\frac{c}{2\pi}\,V(\Om)\,e^{-ct^2/2},
\]
where $A(\Sg_*)$ and $V(\Om)$ denote the Riemannian area and volume of $\Sg_*$ and $\Om$, respectively. Thanks to an estimate of  Maeda~\cite[Thm.~C]{maeda} we obtain
\[
A(\Sg_*)\geq\frac{V(\Om)}{2}\,\bigg(\int_0^{d(\Om)}e^{-ct^2/2}\,dt\bigg)^{-1},
\]
where $d(\Om)$ stands for the Riemannian diameter of $\Om$. From here the desired comparison follows by using that $\int_0^\infty e^{-ct^2/2}\,dt=\sqrt{\pi/(2c)}$ and $e^{-ct^2/2}\leq 1$.

A remarkable case of this example occurs when $\Om$ is an Einstein manifold of positive Ricci curvature, like the sphere $\mathbb{S}^n$, the real projective space $\mathbb{P}^n$ or a lens space.
\end{example}

\begin{example}[Horizontal perturbations]
\label{ex:main2}
Let $M:=\Om\times\rr$ be a Riemannian cylinder with locally convex boundary, and suppose that $M$ is a shrinker with respect to a weight $e^{\mu(x)}\,e^{v(t)}$. For a smooth concave function $\omega$ on $\Om$, we define a horizontal perturbation of the shrinker by $f(x,t):=e^{\mu(x)+\omega(x)}\,e^{v(t)}$. By concavity, this new weight satisfies $\ric_h\geq\ric_\mu=c$, and so our results hold in this setting. Furthermore, if $\omega$ is strictly concave then $\ric_h>c$, which entails uniqueness of horizontal half-spaces as weighted stable sets and weighted minimizers.
\end{example}

\begin{example}[Euclidean weighted cylinders]
\label{ex:main3}
Let $\Om\subseteq\rr^n$ be the closure of a smooth convex domain. Consider the Euclidean solid cylinder $M:=\Om\times\rr$ with a product weight $f(x,t):=e^{h(x)}\,e^{v(t)}$ such that $\ric_h\geq c>0$ and $e^v$ is an affine perturbation of the $c\,$-Gaussian soliton. By equation~\eqref{eq:fricci} and \cite[Cor.~4]{morgandensity} the function $e^h$ is log-concave and $V_h(\Om)<\infty$. Thus, we can reason as in \cite[Lem.~10.6.1]{agm} to infer that $e^h$ is bounded on $\Om$. 

For a weighted stable set $E$ in the conditions of Theorem~\ref{th:stacyl} the interior boundary is a closed, connected, totally geodesic hypersurface $\Sg$ in $\rrn$ with $\ric_f(N,N)=c$. From here we get that $E$ is the intersection with $M$ of a Euclidean half-space.  Moreover, if $\ptl\Sg\neq\emptyset$, then $\Sg$ meets $\ptl M$ orthogonally and $\text{II}(N,N)=0$ along $\ptl\Sg$. The converse is true by Example~\ref{ex:starig}: if a half-space $E$ intersected with $M$ is weighted stationary and satisfies $\ric_f(N,N)=c$ on $\Sg$ and $\text{II}(N,N)=0$ along $\ptl\Sg$, then it is weighted stable.

If we further assume that $\ric_h>c$, then Corollaries~\ref{cor:staunique} and \ref{cor:isocyl} yield that the horizontal half-spaces are the unique weighted stable and isoperimetric regions. If we suppose that $\Om$ is strictly convex, then a hyperplane cannot be entirely contained in $\text{int}(M)$. As a consequence $\ptl\Sg\neq\emptyset$ and we deduce again that any weighted stable set and any weighted minimizer is a horizontal half-space. 

Indeed, the only cases where $\text{int}(M)$ contains an entire hyperplane is when $M$ is a half-space or slab. In these situations the weighted stable sets are determined by half-spaces in $\rrn$ with boundary orthogonal or parallel to $\ptl M$. Next, we will show that the latter ones cannot be weighted isoperimetric regions. For this, we take a slab $E$ whose interior boundary $\Sg$ is a hyperplane parallel to $\ptl M$. We denote by $H_f$ the inner weighted mean curvature of $\Sg$. Consider the lower half-space $M_t:=M\times(-\infty,t)$ such that $V_f(M_t)=V_f(E)$. Suppose that $H_f\geq v'(t)$. By applying the Heintze-Karcher inequality with weights, see Morgan~\cite[Thm.~2]{morgandensity}, we obtain
\[
\frac{V_f(E)}{A_f(\Sg)}\leq\int_0^{r}e^{-H_f s-c s^2/2}\,ds<\int_0^{\infty}e^{-v'(t)\,s-c s^2/2}\,ds=\frac{V_f(M_t)}{A_f(\Om_t)},
\]
where $r:=\text{dist}(\Sg,\ptl M)$ and the last equality comes from a direct computation. This yields $A_f(\Om_t)<A_f(\Sg)$, so that $M_t$ is isoperimetrically better than $E$. In case of inequality $H_f\leq v'(t)$ we employ the same argument with $M-\overline{E}$ and $M-\overline{M}_t$ to obtain the comparison $A_f(\Om_t)\leq A_f(\Sg)$ with strict inequality unless $H_f=v'(t)$.
\end{example}

\begin{example}[Gaussian-type weights]
An interesting case of the previous example arises when $M:=\Om\times\rr$ is a convex cylinder of $\rrn$ endowed with a Gaussian weight $\gamma_c(p):=e^{-c|p|^2/2}$, $c>0$, or with a horizontal perturbation of the form $f(x,t):=\gamma_c(x,t)\,e^{\omega(x)}$, where $\omega$ is any smooth concave function on $\Om$. Our results extend previous ones of the author~\cite[Thms.~4.6 and~5.3]{rosales-gauss} for half-spaces and slabs with one-dimensional log-concave perturbations of $\gamma_c$. In particular, when $\omega(x_1,\ldots,x_n)=\sum_{i=1}^n\omega_i(x_i)$ for some $C^2$ concave functions $\omega_i$, the partitioning problem in $\Om\times\rr$ where $\Om:=\prod_{i=1}^n(a_i,b_i)$ and $-\infty\leq a_i<b_i\leq+\infty$ for any $i=1,\ldots,n$ was studied by Brock, Chiacchio and Mercaldo~\cite[Cor.~4.1]{bcm3}.
\end{example}

\subsection{The case $c=0$}
\label{subsec:zerocase}
\noindent

Here we consider a weighted cylinder $(M,g,f)$ with base space $\Om$, possibly with locally convex boundary, and weight $f(x,t):=e^{h(x)}\,e^{v(t)}$, where $h\in C^\infty(\Om)$ satisfies $\ric_h\geq 0$ and $v(t)$ is an affine function. This situation is very different from the one studied in Section~\ref{subsec:unique}. Note for instance that the horizontal half-spaces need not be weighted minimizers; indeed they can have infinite volume or perimeter. The existence of weighted isoperimetric regions is neither guaranteed in this case.

Let $E$ be a weighted stable set and denote by $\Sg$ the regular part of its interior boundary. If we suppose that $\Sg$ is disconnected or verifies the integral condition $\int_\Sg\escpr{\xi,N}\,da_f\neq 0$, then we deduce from Remarks~\ref{re:muchas} (i) that $\Sg$ is a totally geodesic hypersurface with $\ric_f(N,N)=0$ on $\Sg$ and $\text{II}(N,N)=0$ along $\ptl\Sg$. This is an extension of Theorem~\ref{th:stacyl} to the present setting.

As to the previous integral condition, if we proceed as in equation~\eqref{eq:divthweight} then we obtain
\[
\int_\Sg\escpr{\xi,N}\,da_f=-\int_E\escpr{\nabla\psi,\xi}\,dv_f=-\int_E(v'\circ\pi)\,dv_f,
\]
for any bounded set $E$ with smooth interior boundary $\Sg$. Hence, this integral vanishes if and only if $v'=0$ (as happens in the unweighted case). By this reason we will focus in the case $v'\neq 0$. 

In relation to our uniqueness results, if we further assume that $\ric_h>0$, or that $\ptl\Om$ is locally strictly convex and $\ptl\Sg\neq\emptyset$, then we can reason as in Corollary~\ref{cor:staunique} to conclude that any connected component of $\Sg$ is a horizontal slice. Since $v'\neq0$, the fact that $H_f$ is constant on $\Sg$ implies by the computations in Example~\ref{ex:horstable} that $\Sg$ is connected and $E$ is a horizontal half-space. As in Corollary~\ref{cor:isocyl} we get that any weighted minimizer is a horizontal half-space when $V_h(\Om)<\infty$. Since $V_f(E)<\infty$ the half-space will be a lower one or an upper one depending on the sign of $v'$. 

An interesting consequence is that bounded weighted isoperimetric regions with smooth interior boundary cannot exist in this setting. By Theorem~\ref{th:regularity} we infer that no bounded minimizers exist when $n\leq 6$, where $n$ is the dimension of $\Om$. In particular, this applies for the Euclidean weight $f(x,t):=e^t$, which is relevant since the associated $f$-minimal hypersurfaces (those with $H_f=0$) are the so-called \emph{translating solitons} of the mean curvature flow in $\rrn$.

\appendix
\section{Another proof of Corollary~\ref{cor:isocyl}}
\label{app:a}
\setcounter{theorem}{0}
\setcounter{equation}{0}  
\setcounter{subsection}{0}

Our rigidity properties for minimizers in weighted cylinders can be derived without using the stability condition further than the connectivity result in Corollary~\ref{cor:connected}. The argument is mainly based on the second variation formula and follows the proofs of \cite[Eq.~(3.41)]{bayle-thesis}, \cite[Thm.~3.2]{bayle-rosales} and \cite[Cor.~18.10]{gmt} with suitable modifications. This method also shows the isoperimetric property of horizontal half-spaces. We include here the details for the sake of completeness.

\begin{theorem}
\label{th:another}
Let $(M,g,f)$ be a weighted cylinder with base space $\Om$, possibly with locally convex boundary, and weight of the form
\[
f(x,t):=e^{h(x)}\,e^{v(t)},
\]
where $e^h$ is a smooth bounded function on $\Om$ with $\emph{Ric}_h\geq c>0$, and $e^v$ is an affine perturbation of the $c$-Gaussian soliton on $\rr$. Then, the horizontal half-spaces are weighted minimizers. Moreover, if $E$ is any weighted minimizer with regular and singular parts of its interior boundary denoted by $\Sg$ and $\Sg_0$ respectively, then $\Sg$ is a connected totally geodesic hypersurface, $\Sg_0=\emptyset$, $\emph{Ric}_f(N,N)=c$ on $\Sg$ and $\emph{II}(N,N)=0$ along $\ptl\Sg$.
\end{theorem}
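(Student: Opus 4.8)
The plan is to identify the isoperimetric profile of $M$ with the one-dimensional $c$-Gaussian profile and then to extract the rigidity from the equality case of a second variation comparison. First I would verify that the horizontal half-spaces minimize. Since $\ric_h\geq c$ and $v''=-c$, equation~\eqref{eq:riccidesc} gives $\ric_f\geq c>0$; combined with the boundedness of $e^h$ and $\int_\rr e^v\,dt<\infty$ this yields $V_f(M)<\infty$ by Morgan~\cite[Cor.~4]{morgandensity}. Normalizing $dv_f$ to a probability measure, the L\'evy-Gromov type inequality of Bakry and Ledoux~\cite{bl} (see also Milman~\cite{milman}) bounds the normalized weighted perimeter of any set below by the value $I_c$ of the $c$-Gaussian profile at its normalized volume. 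A direct computation shows that a horizontal half-space $\Om\times(-\infty,t_0)$ has normalized volume equal to the $c$-Gaussian measure of $(-\infty,t_0)$ —because $e^v$ is a constant multiple of a shifted $c$-Gaussian density— and normalized weighted perimeter equal to $I_c$ of that value; hence it attains the Bakry-Ledoux bound and is a weighted minimizer. As $t_0$ ranges over $\rr$ these sets realize every volume in $(0,V_f(M))$, so the profile $I_M$ of $M$ coincides with the (smooth) half-space profile, which by $v''=-c$ satisfies the Gaussian identity $I_M\,I_M''=-c$.

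Next I would exploit this identity through the second variation. Let $E$ be any weighted minimizer; by Theorems~\ref{th:regularity} and~\ref{th:parabolicity} its regular boundary $\Sg$ is weighted parabolic with $A_f(\Sg)=:L<\infty$, and by Lemma~\ref{lem:varprop} it has constant weighted mean curvature $H_f$ and meets $\ptl M$ orthogonally. I would deform $E$ by the equidistant-type flow whose field $\overline{N}$ is tangent to $\ptl M$ and equals the inner normal $N$ on $\Sg$, so that the normal speed is $u\equiv 1$. Writing $a(s):=V_f(E_s)$ and $p(s):=A_f(\Sg_s)$, the first variation gives $a'(0)=-L$ and $p'(0)=-H_f\,L$, while the second variation yields $a''(0)=H_f\,L$ and, using $\mathcal{L}_f(1)=\ric_f(N,N)+|\sg|^2$ from \eqref{eq:jacobi}--\eqref{eq:hfprima}, the relation $p''(0)=\indo_f(1,1)+H_f\,a''(0)$, where $\indo_f(1,1)=-\int_\Sg(\ric_f(N,N)+|\sg|^2)\,da_f-\int_{\ptl\Sg}\text{II}(N,N)\,dl_f$.

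The minimality of $E$ then forces $F(s):=p(s)-I_M(a(s))\geq 0$ with $F(0)=0$, so that $F'(0)=0$ and $F''(0)\geq 0$. The first condition recovers $I_M'(v)=H_f$, and the second, after inserting $I_M''(v)=-c/L$ from the profile identity, collapses to $\indo_f(1,1)\geq -c\,L$, that is,
\[
\int_\Sg\big(\ric_f(N,N)+|\sg|^2\big)\,da_f+\int_{\ptl\Sg}\text{II}(N,N)\,dl_f\leq c\,A_f(\Sg).
\]
Since $\ric_f(N,N)\geq c$, $|\sg|^2\geq 0$ and, by local convexity, $\text{II}(N,N)\geq 0$, the reverse inequality is automatic, so all three integrands saturate their bounds: $\Sg$ is totally geodesic, $\ric_f(N,N)=c$ on $\Sg$, and $\text{II}(N,N)=0$ along $\ptl\Sg$. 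Because $|\sg|^2$ blows up near any singular point by Theorem~\ref{th:regularity}, the identity $|\sg|^2\equiv 0$ forces $\Sg_0=\emptyset$; and since $\ric_f(N,N)=c>0$ excludes the degenerate alternative, Corollary~\ref{cor:connected} gives that $\Sg$ is connected.

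The hard part will be the second variation bookkeeping rather than the comparison itself. The field $\overline{N}$ is tangent to $\ptl M$ but not geodesic near $\ptl\Sg$, so the boundary term $\int_{\ptl\Sg}\escpr{\overline{N},\nu}\,dl_f$ in the first variation vanishes only at $s=0$; its derivative is precisely what produces the $\int_{\ptl\Sg}\text{II}(N,N)$ contribution in $\indo_f(1,1)$, and tracking it correctly (this is where the orthogonality computation of Lemma~\ref{lem:eigen}\,(ii) enters) is essential, since an error there would drop the conclusion $\text{II}(N,N)=0$. A second technical point is that $u\equiv 1$ is not compactly supported when $\Sg$ is non-compact; as in Proposition~\ref{prop:extended} I would justify the integral identities by approximating with a parabolic cutoff sequence $\{\var_k\}$ satisfying \eqref{eq:sequence}, the finiteness of $\int_\Sg(\ric_f(N,N)+|\sg|^2)\,da_f$ and $\int_{\ptl\Sg}\text{II}(N,N)\,dl_f$ being guaranteed a posteriori by the inequality above. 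This follows the scheme of \cite[Eq.~(3.41)]{bayle-thesis}, \cite[Thm.~3.2]{bayle-rosales} and \cite[Cor.~18.10]{gmt}, with the modifications needed for a free boundary and a singular set.
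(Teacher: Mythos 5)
Your proposal is correct, but it departs from the paper on the first half of the statement. To show that horizontal half-spaces minimize, you identify the profile $I_{M,f}$ with the one-dimensional $c$-Gaussian profile via the Bakry--Ledoux L\'evy--Gromov inequality and check that half-spaces attain the bound. The paper's appendix is written precisely to avoid that machinery (its stated purpose is a self-contained argument): it derives the weak differential inequality $I_{M,f}''\leq -c/I_{M,f}$ at every volume -- which requires existence of minimizers of every volume and continuity of the profile, imported from Milman -- computes that the half-space profile $G_f$ satisfies $G_f''=-c/G_f$ exactly, and concludes $I_{M,f}=G_f$ by a maximum-principle comparison at an interior minimum of $I_{M,f}-G_f$. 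Your route buys a smooth profile for free (so $F(s)=p(s)-I_{M,f}(a(s))$ can be differentiated classically, with no need for the paper's weak $\limsup$ second derivative) and dispenses with the existence theory; it costs reliance on Bakry--Ledoux plus the Gaussian isoperimetric inequality, including their validity on manifolds with locally convex boundary, which the paper itself asserts in the introduction citing Milman, so this is admissible. The rigidity half of your argument is essentially the paper's: your condition $F''(0)\geq 0$, after the $H_f\,a''(0)$ terms cancel, is exactly the paper's chain
\[
I_{M,f}''(w_0)\leq-\frac{\int_\Sg\big(\ric_f(N,N)+|\sg|^2\big)\,da_f+\int_{\ptl\Sg}\text{II}(N,N)\,dl_f}{P_f(E)^2}\leq\frac{-c}{I_{M,f}(w_0)}
\]
forced into equality; your cutoff treatment of noncompact $\Sg$ coincides with the paper's choice of test fields $X_k=\var_k\,N$ built from a parabolic sequence and its Fatou/dominated-convergence passage to the limit; and your final steps ($|\sg|^2\equiv 0$ forcing $\Sg_0=\emptyset$ by the blow-up alternative in Theorem~\ref{th:regularity}, and connectivity from Corollary~\ref{cor:connected} since $\ric_f(N,N)=c>0$ excludes the degenerate branch) are the same as in the paper.
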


\begin{proof}
From equation~\eqref{eq:riccidesc} we get $\ric_f\geq c$. Since $M$ is complete and $c>0$ we deduce that $V_f(M)<\infty$ by \cite[Cor.~4]{morgandensity}. This provides existence of weighted isoperimetric regions of any given volume, see \cite[Ch.~5]{gmt} and \cite[Sect.~A.2]{milman-invent}. Moreover, the weighted isoperimetric profile $I_{M,f}:(0,V_f(M))\to\rr^+_0$ defined by
\begin{equation}
\label{eq:isopro}
I_{M,f}(w):=\inf\{P_f(E)\,;\,E\sub M\text{ is a Borel set with }V_f(E)=w\}
\end{equation}
is a continuous function on $[0,V_f(M)]$ when extended to $0$ at the boundary points~\cite[Lem.~6.9]{milman-invent}.

Fix $w_0\in (0,V_f(M))$ and take a weighted minimizer $E$ with $V_f(E)=w_0$. By Theorem~\ref{th:regularity} the regular part $\Sg$ of $\overline{\ptl E\cap\text{int}(M)}$ is a smooth embedded hypersurface, possibly with boundary $\ptl\Sg=\Sg\cap\ptl M$. As $E$ is weighted stationary, the weighted mean curvature $H_f$ of $\Sg$ with respect to the unit normal $N$ pointing into $E$ is constant, and $\Sg$ meets $\ptl M$ orthogonally along $\ptl\Sg$, see Lemma~\ref{lem:varprop} (i). Moreover, we can apply Theorem~\ref{th:parabolicity} to infer that $\Sg$ is weighted parabolic. Thus, there is a sequence $\{\var_k\}_{k\in\nn}\sub C^\infty_0(\Sg)$ with $\var_k\neq 0$ and satisfying the properties in \eqref{eq:sequence}. From the stability of $E$ and Corollary~\ref{cor:connected} it follows that $\Sg$ is connected.

For any $k\in\nn$ we choose a smooth vector field $X_k$ with compact support on $M$, tangent along $\ptl M$, and such that $X_k=\varphi_k\,N$ on $\Sg$. The one-parameter flow $\{\phi_s\}_{s\in\rr}$ of $X_k$ produces a variation $E_s:=\phi_s(E)$ of $E$. Let $P_k(s):=P_f(E_s)$ and $V_k(s):=V_f(E_s)$ be the associated perimeter and volume functionals. Note that $P_f(s)=A_f(\Sg_s)$, where $\Sg_s=\phi_s(\Sg)$ for any $s\in\rr$. From the first variation formulas for the weighted area and volume in \cite[Lem.~3.2, Re.~3.1]{castro-rosales}, we have
\begin{equation}
\label{eq:michelle1}
P_k'(0)=-\int_\Sg H_f\,\varphi_k\,da_f, \quad 
V_k'(0)=-\int_\Sg\varphi_k\,da_f.
\end{equation}
As $V_k'(0)<0$ the function $V_k$ is a local diffeomorphism at $s=0$. In particular, there is an open interval $J_k$ containing $w_0$ where we can define $p_k:J_k\to\rr$ by $p_k(w):=P_k(V_k^{-1}(w))$. From equation \eqref{eq:isopro} and the fact that $E$ is a weighted isoperimetric region, it is clear that
\[
I_{M,f}(w)\leq p_k(w),\qquad I_{M,f}(w_0)=p_k(w_0)=P_f(E).
\]
Hence, if we consider the weak second derivative of $I_{M,f}$ at $w_0$ given by
\[
I''_{M,f}(w_0):=\limsup_{h\to 0^+}\,
\frac{I_{M,f}(w_0+h)+I_{M,f}(w_0-h)-2I_{M,f}(w_0)}{h^2},
\]
then
\begin{equation}
\label{eq:michelle2}
I''_{M,f}(w_0)\leq p_k''(w_0), \quad\text{for any } k\in\nn,
\end{equation}
where $p''_k(w_0)$ is the usual second derivative of $p_k$ at $w_0$. Let us compute this derivative. On the one hand, by using \eqref{eq:michelle1} we get
\[
p_k'(w_0)=\frac{P_k'(0)}{V_k'(0)}=H_f, \quad\text{for any } k\in\nn.
\]
On the other hand, from the second variation formula in \cite[Prop.~3.5]{castro-rosales} we obtain
\begin{align*}
p_k''(w_0)=\frac{(P_k-H_f\,V_k)''(0)}{V'_k(0)^2}=\frac{\indo_f(\var_k,\var_k)}{(\int_\Sg\var_k\,da_f)^2}, \quad\text{for any } k\in\nn,
\end{align*}
where $\indo_f$ is the weighted index form in \eqref{eq:index1}. So, by taking $\limsup$ in \eqref{eq:michelle2} when $k\to\infty$, having in mind \eqref{eq:sequence} and applying Fatou's lemma and the dominated convergence theorem, we deduce
\[
I_{M,f}''(w_0)\leq-\frac{\int_\Sg\big(\text{Ric}_f(N,N)+|\sg|^2\big)\,da_f
+\int_{\ptl\Sg}\text{II}(N,N)\,dl_f}{P_f(E)^2}\leq\frac{-c}{I_{M,f}(w_0)},
\]
where in the last inequality we have employed $\text{Ric}_f(N,N)\geq c$, $|\sg|^2\geq 0$ and $\text{II}(N,N)\geq 0$. 

At summarizing, we have shown that $I_{M,f}$ satisfies the differential inequality
\begin{equation}
\label{eq:diffineq1}
I''_{M,f}\,(w)\leq\frac{-c}{I_{M,f}(w)}, \quad\text{for any } w\in (0,V_f(M)).
\end{equation}
Moreover, if equality holds for some $w_0\in(0,V_f(M))$ and $E$ is any weighted minimizer with $V_f(E)=w_0$, then $\Sg$ is totally geodesic, $\ric_f(N,N)=c$ on $\Sg$ and $\text{II}(N,N)=0$ along $\ptl\Sg$. The fact that $|\sg|^2=0$ on $\Sg$ entails that $\Sg_0=\emptyset$ by Theorem~\ref{th:regularity}.

Now consider the lower horizontal half-space $M_t:=\Om\times(-\infty,t)$, whose interior boundary is $\Om_t:=\Om\times\{t\}$. Since $\ric_h\geq c>0$ we get $V_h(\Om)<\infty$, so that $\Om_t$ has finite weighted area
\[
A_f(t):=A_f(\Om_t)=V_h(\Om)\,e^{v(t)}.
\]
If we denote $V_f(t):=V_f(M_t)$, then the coarea formula applied with $\pi(x,t):=t$ yields
\[
V_f(t)=\int_{-\infty}^t A_f(s)\,ds.
\]
From here it is straightforward to check that the associated profile $G_f:[0,V_f(M)]\to\rr$ defined by $G_f(w):=A_f(V_f^{-1}(w))$ is a smooth function vanishing at the boundary, and satisfying
\begin{equation}
\label{eq:diffineq2}
G''_{f}\,(w)=\frac{-c}{G_f(w)}, \quad\text{for any } w\in (0,V_f(M)).
\end{equation}

It is clear that $I_{M,f}\leq G_f$ on $[0,V_f(M)]$ with equality at the boundary points. By continuity the function $I_{M,f}-G_f:[0,V_f(M)]\to\rr$ achieves its minimum at $w_0\in [0,V_f(M)]$. Suppose that there was $w\in(0,V_f(M))$ with $I_{M,f}(w)<G_f(w)$. In such a case $I_{M,f}(w_0)<G_f(w_0)$, so that $w_0\in(0,V_f(M))$ and $(I_{M,f}-G_f)''(w_0)\geq 0$. On the other hand, by taking into account \eqref{eq:diffineq1} and \eqref{eq:diffineq2}, we would obtain
\[
(I_{M,f}-G_f)''(w_0)\leq\frac{c}{G_f(w_0)}-\frac{c}{I_{M,f}(w_0)}<0,
\]
a contradiction. This proves that $I_{M,f}=G_f$ and so, any lower horizontal half-space is a weighted isoperimetric region. From \eqref{eq:diffineq2} we infer that equality holds in \eqref{eq:diffineq1} on $(0,V_f(M))$. Thus, we conclude the announced rigidity properties for any weighted minimizer.
\end{proof}

\providecommand{\bysame}{\leavevmode\hbox to3em{\hrulefill}\thinspace}
\providecommand{\MR}{\relax\ifhmode\unskip\space\fi MR }
\providecommand{\MRhref}[2]{%
  \href{http://www.ams.org/mathscinet-getitem?mr=#1}{#2}
}
\providecommand{\href}[2]{#2}

\end{document}